\newtheorem{Theorem}{Theorem}[section]
\newtheorem{Lemma}[Theorem]{Lemma}
\newtheorem{Proposition}[Theorem]{Proposition}
\newtheorem{Corollary}[Theorem]{Corollary}
\newtheorem{Definition}[Theorem]{Definition}
\newtheorem*{Conjecture}{Conjecture}
\theoremstyle{definition}
\newtheorem{Remark}[Theorem]{Remark}
\newcommand{\opn}{{\mathcal{O}_{\mathbb{P}^n}}}
\newcommand{\pn}{{\mathbb{P}^n}}
\newcommand{\KK}{{\mathbb K}}
\newcommand{\PP}{{\mathbb P}}
\newcommand{\OO}{{\mathcal O}}
\newcommand{\rk}{\operatorname{rk}}
\newcommand{\Hom}{\operatorname{Hom}}
\newcommand{\Ext}{\operatorname{Ext}}
\begin{document}

\title{Simplicity and exceptionality of syzygy bundles over $\PP^n$}
\author{Simone Marchesi, Daniela Moura Prata}
\date{}
\maketitle
\abstract{\noindent In this work we will prove results that ensure the simplicity and the exceptionality of vector bundles which are defined by the splitting of pure resolutions. We will call such objects syzygy bundles. }

\section*{Introduction}

The study of particular families of vector bundles over projective varieties has always taken a great part in algebraic geometry. In particular, many authors focused on the family of syzygy bundles, defined as the kernel of an epimorphism of the form $$\phi: \oplus_{i=1}^t \OO_{\PP^n}(-d_i) \longrightarrow \OO_{\PP^n},$$ that have been studied in the last decades. Brenner in \cite{B} gives combinatorial conditions for (semi)stability of the syzygy bundles when they are given by monomial ideals. Co$\rm{\check{a}}$nda in \cite{Co} studies stability for syzygies defined by polynomials of the same degree, of any possible rank for $n \geq 3$. Costa, Marques, Mir\'o-Roig, see \cite{CMMR}, also study stability of syzygies given by polynomials of same degree and studied moduli spaces.

Ein, Lazarsfeld and Mustopa in \cite{ELM, EL} extend the problem for smooth projective varieties X, studying the stability of the syzygy bundles that are given by the kernel of the evaluation map $\mbox{eval}_L: H^0(L)\otimes_{\KK} \mathcal{O}_X \longrightarrow L$ where $L$ is a very ample line bundle over $X$. 

We define the \emph{syzygy bundles} as the vector bundles coming from the splitting of pure resolutions of the form

\begin{eqnarray}\label{pres}
\xymatrix{0 \ar[r] & \OO_{\PP^n}^{\beta_{p}}(-d_{p})  \ar[r] & \cdots \ar[r] & \OO_{\PP^n}^{\beta_{1}}(-d_{1}) \ar[r] & \OO_{\PP^n}^{\beta_{0}} \ar[r] & 0}
\end{eqnarray}  into short exact sequences. Observe that the first syzygy bundle $F$ in (\ref{pres}), obtained as 
$$
0\longrightarrow F \longrightarrow \OO_{\PP^n}^{\beta_{1}}(-d_{1}) \longrightarrow \OO_{\PP^n}^{\beta_{0}} \longrightarrow 0
$$
is also a syzygy bundle in the sense of \cite{B}, \cite{Co} and \cite{CMMR}.
%We obtain these resolutions by sheafifying pure resolutions of modules over the ring $R = \KK[x_0, \cdots, x_n]$ of polynomials in $n+1$ variables. Boij and S$\rm{\ddot{o}}$derberg \cite{boijS} conjecture that given any strictly increasing sequence of integers $d_1 < \cdots < d_p$, there is a Cohen-Macaulay $R-$module $M$ with a pure resolution 
%$$
%\xymatrix{0 \ar[r] & R^{\beta_p}(-d_p) \ar[r] & \cdots \ar[r] & R^{\beta_1}(-d_1) \ar[r] & R^{\beta_0}(-d_0) \ar[r] & M \ar[r] & 0}
%$$
%which is said to be of type $(d_1, \cdots, d_p)$. Eisenbud, Fl$\rm{\phi}$ystad and Weyman in \cite{EFW} prove the conjecture for any $n$ and a field of characteristic zero and Eisenbud and Schreyer, see \cite{ES}, prove it in general.

%Herzog and K$\rm{\ddot u}$hl \cite{herkuh} studied monomial ideals with a linear pure resolution and they asked whether all the syzygies of these resolutions are indecomposable. Our results imply under which conditions the syzygies of a module are indecomposable. See remark \ref{rmk}. 

In the first section, we recall some notions on pure resolutions and we introduce in detail what we will mean by \emph{syzygy bundle} on the projective space.

In the second section, we will prove two results, see Theorem \ref{thm-b0} and Theorem \ref{thm-f1}, which ensure the simplicity of the syzygy bundles previously defined. The results here generalize the ones proved in Section 4 and Section 5 of \cite{Prata}.

In particular we provide an answer to a question proposed by Herzog and K$\rm{\ddot u}$hl in \cite{herkuh}, where they wonder whether the modules coming from linear pure resolution of monomials ideals are indecomposable or not. We will be able to ensure such property under specific hypotheses, see Remark \ref{rmk}.

In the third section, we will show necessary and sufficient conditions to prove their exceptionality, see Theorem \ref{thm-excep}, and we will state a conjecture which relates syzygy bundles with Steiner bundles. 

In the fourth section, we will consider some classical pure resolutions, studying when the bundles defined in their splitting are simple and when exceptional.
\vspace{3mm}\\
\noindent \textbf{Acknowledgements}. The first author is supported by the FAPESP postdoctoral grant number 2012/07481-1. The second author is supported by the FAPESP postdoctoral grant number 2011/21398-7. Both authors would like to thank Prof. Rosa Maria Mir\'o-Roig for introducing them to the topic and Prof. Marcos Jardim for many helpful conversations and suggestions.

\section{Preliminaries}

In this section, we fix the notation that will be used in this work and we recall some basics definitions and results. \\
Let $\KK$ be an algebraically closed field of characteristic 0 and let $R = \KK[x_0, \cdots, x_n]$ be the ring of polynomials in $n+1$ variables. Let $M$ be a graded $R-$module.

An $R-$module $N \neq 0$ is said to be a {\it $k-$syzygy} of $M$ if there is an exact sequence of graded $R-$modules
$$
\xymatrix{0 \ar[r] & N \ar[r] & F_k \ar[r]^{\varphi_k} \ar[r] & F_{k-1} \ar[r] & \cdots \ar[r] & F_1 \ar[r]^{\varphi_1}\ar[r] & M \ar[r]  & 0 }$$
where the modules $F_i$ are free $R-$modules.\\
We say that $M$ has a {\it finite projective dimension} if there exist a free resolution over $R$
\begin{eqnarray}\label{minimal}
\xymatrix{0 \ar[r] & F_s \ar[r]^{\varphi_S} & F_{s-1} \ar[r] & \cdots \ar[r] & F_1 \ar[r]^{\varphi_1} & F_0 \ar[r] & M \ar[r] & 0  }
\end{eqnarray}
The least lenght  $s$ of such resolutions is called the {\it projective dimension} of $M$ and denoted by ${\rm pd}(M)$. The resolution (\ref{minimal}) is {\it minimal} if ${\rm im}\,\varphi_i \subset m F_{i-1}$, $ \forall\, i$, where $m = (x_0, \cdots, x_n)$ is the irrelevant ideal of $R$. From the Hilbert syzygy Theorem, see for example \cite[Theorem 1.1.8]{miro}, we have that ${\rm pd}(M) \leq n+1$.\\
If $M$ has a graded minimal free resolution
$$
\xymatrix{0 \ar[r] & \oplus_{j \in \mathbb{Z}}R^{\beta_{p,j}(M)}(-j) \ar[r] & \cdots \ar[r] & \oplus_{j \in \mathbb{Z}} R^{\beta_{1,j}(M)}(-j) \ar[r] & \oplus_{j \in \mathbb{Z}} R^{\beta_{0,j}(M)}(-j) \ar[r] & M \ar[r] & 0 }$$
then the integers  $\beta_{i,j}(M) = \dim {\rm Tor}^R_i (M, \KK)_j$ are called the $(i,j)-$th graded Betti number of $M$, and $\beta_i := \sum_j \beta_{i,j}(M)$ is the $i-$th total Betti number of $M$.\\
We say $M$ has a {\it pure resolution of type} $d = (d_0, \cdots, d_p) $ if it is given by
$$
\xymatrix{0 \ar[r] & R^{\beta_p}(-d_p) \ar[r] & \cdots \ar[r] & R^{\beta_1}(-d_1) \ar[r] & R^{\beta_0}(-d_0) \ar[r] & M \ar[r] & 0}
$$
with $d_0 < d_1 < \cdots<d_p, d_i \in \mathbb{Z}$.

We say $M$ has a {\it linear resolution} if it has a pure resolution of type $(0, 1, \cdots, p)$.

Eisenbud and Schreyer \cite{ES} proved the following result conjectured by Boij and Soderberg \cite{boijS}.

\begin{Theorem}\label{bsconj} For any degree sequence $d = (d_0,\cdots, d_p)$ there is a Cohen-Macaulay module $M$ with a pure resolution of type $d$.

\end{Theorem}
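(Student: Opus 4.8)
The plan is to produce the module $M$ by writing down an explicit complex of graded free $R$-modules with the prescribed twists $d_0 < d_1 < \cdots < d_p$ and then proving it is exact; the resolved module $M = \operatorname{coker}$ of the last map will automatically have the desired pure resolution, and its Cohen--Macaulayness will follow from a codimension count together with the Auslander--Buchsbaum formula. Since the ground field has characteristic $0$, I would favour the $GL(V)$-equivariant construction, with $V=\KK^{n+1}$ and $R=\operatorname{Sym}(V)$.

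First I would reduce to the artinian case: it suffices to realize a degree sequence of length equal to the number of variables. Indeed, if $M_0$ is a finite-length module over $S=\KK[x_0,\dots,x_{p-1}]$ with a pure resolution of type $(d_0,\dots,d_p)$, then $M=M_0\otimes_{\KK}\KK[x_p,\dots,x_n]$ is Cohen--Macaulay of codimension $p$ over $R$ and inherits the same (base-changed) pure resolution. For the artinian model the Koszul complex already realizes the trivial degree sequence $(0,1,\dots,n+1)$, and the idea is to ``stretch'' it equivariantly: build a complex whose $i$-th term is $\mathbb{S}_{\alpha(i)}(V)\otimes R(-d_i)$, where the partitions $\alpha(i)$ are dictated by the consecutive gaps $d_i-d_{i-1}$, and whose differentials are the essentially unique $GL(V)$-equivariant (Pieri-type) maps. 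The total Betti numbers are then forced to be $\beta_i=\dim_{\KK}\mathbb{S}_{\alpha(i)}(V)$.

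The crux, and the step I expect to be hardest, is proving exactness of this equivariant complex. I would establish it through the geometric (Weyman) technique: realize the candidate as the total pushforward of a tautological Koszul-type complex on a flag variety or a product of projective spaces, where Bott's theorem (Borel--Weil--Bott) controls precisely which cohomology groups survive. Choosing the twists so that exactly one cohomology group contributes in each homological degree, the acyclicity of the geometric object upstairs forces exactness of the complex downstairs. Alternatively, one can verify the Buchsbaum--Eisenbud acyclicity criterion directly, checking the ranks of the differentials and the depths of the ideals of maximal minors from the explicit representation-theoretic description; the equivariance is what makes these rank and depth computations tractable.

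Finally, for the Cohen--Macaulay conclusion: the complex has length $p$, so $\operatorname{pd}(M)\le p$ and Auslander--Buchsbaum gives $\operatorname{depth}(M)=(n+1)-p$; since one checks from the explicit support (respectively from the artinian reduction above) that $\codim(M)=p$, we obtain $\dim(M)=n+1-p=\operatorname{depth}(M)$, i.e.\ $M$ is Cohen--Macaulay. If a characteristic-free statement were wanted, I would instead follow Eisenbud--Schreyer: use the BGG/Tate correspondence to pass from pure resolutions to their cohomological mirror, the \emph{supernatural} vector bundles on $\PP^n$, construct such a bundle with arbitrary prescribed root sequence by iterated pushforwards along linear projections $\PP^n \dashrightarrow \PP^{n-1}$, and then transport the cohomology table back to the desired pure resolution.
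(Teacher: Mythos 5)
A point of reference first: the paper does not prove this statement at all --- it is quoted as Theorem \ref{bsconj} from Eisenbud--Schreyer \cite{ES} (resolving the Boij--S\"oderberg conjecture), so there is no internal proof to compare yours against. Judged on its own terms, your outline correctly identifies the two standard proofs in the literature: the $GL(V)$-equivariant Pieri-map construction (due to Eisenbud, Fl{\o}ystad and Weyman, valid in characteristic zero) and the characteristic-free pushforward construction of Eisenbud--Schreyer. Your reduction to the Artinian case (tensoring a finite-length module over $\KK[x_0,\dots,x_{p-1}]$ with the remaining variables, then Auslander--Buchsbaum plus the codimension count to get Cohen--Macaulayness) is complete and correct, and is essentially the inverse of the Artinian-reduction step the paper itself performs immediately after quoting the theorem.

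That said, the proposal is a roadmap rather than a proof: the entire difficulty of the theorem is concentrated in the exactness of the candidate complex, and that is precisely the step you defer. Neither the Bott-vanishing bookkeeping for the geometric version (one must exhibit the specific product of projective spaces or flag variety, the specific twisting bundle determined by the gaps $d_i-d_{i-1}$, and verify that exactly one cohomology group survives in each homological degree) nor the Buchsbaum--Eisenbud verification (generic ranks of the Pieri differentials and depth bounds on their ideals of minors) is actually carried out, and these computations occupy most of the source papers. A smaller inaccuracy: in Eisenbud--Schreyer the supernatural bundles are not obtained from pure resolutions by a BGG/Tate functor; the two families are constructed independently, both by pushforward, and are ``mirror'' to each other only through the positivity pairing between Betti tables and cohomology tables. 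So: right architecture, correct and complete reductions at both ends, but the load-bearing middle step is asserted, not proved.
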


Consider $S = \KK[x_0, \cdots,x_m]$ the ring of polynomials in $m+1$ variables. Let $d = (d_0, \cdots, d_p)$ be a degree sequence. Then by Theorem \ref{bsconj}, there is a Cohen-Macaulay module $M$ with pure resolution

$$\xymatrix{0 \ar[r] & S^{\beta_p}(-d_p) \ar[r] & \cdots \ar[r] & S^{\beta_1}(-d_1) \ar[r] & S^{\beta_0}(-d_0) \ar[r] & M \ar[r] & 0}.
$$ Let $\overline{M}$ be the Artinian reduction of $M$. Then $\overline{M}$ is an Artinian module with pure resolution

\begin{eqnarray}\label{respura}
\xymatrix{0 \ar[r] & R^{\beta_p}(-d_p) \ar[r] & \cdots \ar[r] & R^{\beta_1}(-d_1) \ar[r] & R^{\beta_0}(-d_0) \ar[r] & \overline{M} \ar[r] & 0}
\end{eqnarray} where $R= \KK[x_0,\cdots, x_n]$ with $n = p-1$.\\
We now want to pass from modules to vector bundles and study pure resolutions involving them. Assume we have an Artinian module $\overline{M}$ with pure resolution (\ref{respura}); sheafifying the complex we obtain

\begin{eqnarray} \label{primares}
\xymatrix{0 \ar[r]& \OO_{\PP^n}^{\beta_{n+1}}(-d_{n+1}) \ar[r] & \cdots \ar[r] & \OO_{\PP^n}^{\beta_1}(-d_1) \ar[r] & \OO^{\beta_0}_{\PP^n}(-d_0) \ar[r] &  0}.
\end{eqnarray}
During this paper, we will be interested in such resolution, in particular we will be interested in studying properties of the bundles coming from by splitting the resolution in short exact sequences.
\begin{Definition}
We will call \emph{syzygy bundles} the vector bundles which arise by splitting of resolutions of the type (\ref{primares}).
\end{Definition}

We conclude this section recalling the following notions and results on vector bundles.\\
Let $E$ be a vector bundle on $\mathbb{P}^{n}$. A {\it resolution} of $E$ is an exact sequence

\begin{eqnarray}\label{defresol}
\xymatrix{0 \ar[r]& F_d \ar[r] & F_{d-1} \ar[r] & \cdots  \ar[r] & F_1 \ar[r] & F_0 \ar[r] & E \ar[r]& 0}
\end{eqnarray}  where every $F_i$ splits as a direct sum of line bundles.

One can show that every vector bundle on $\pn$ admits resolution of the form (\ref{defresol}), see \cite[Proposition 5.3]{JM}.  The minimal number $d$ of such resolution is called {\it homological dimension} of $E$, and it is denoted by ${\rm hd}(E)$.  Bohnhorst and Spindler proved the following two results, \cite[Proposition 1.4]{BS} and \cite[Corollary 1.7]{BS}, respectively.

\begin{Proposition}{\label{bsprop}}
Let $E$ be a vector bundle on $\pn$. Then
$$ {\rm hd}(E) \leq d \Longleftrightarrow H^{q}_{*}(E) = 0, \forall \; 1 \leq q \leq n-d-1.$$
\end{Proposition}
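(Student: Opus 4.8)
The plan is to read both implications off the short exact sequences into which the resolution (\ref{defresol}) breaks, using two inputs: the absence of intermediate cohomology for line bundles, $H^q_*(\opn(a)) = 0$ for all $a \in \ZZ$ and $1 \leq q \leq n-1$, and Horrocks' splitting criterion, namely that a vector bundle $G$ on $\pn$ is a direct sum of line bundles precisely when $H^q_*(G) = 0$ for $1 \leq q \leq n-1$. Given a resolution of type (\ref{defresol}), I set $K_0 = E$ and $K_i = \operatorname{im}(F_i \to F_{i-1})$ for $i \geq 1$; each $K_i$ is again a vector bundle, and the resolution decomposes into
\[
0 \longrightarrow K_{i+1} \longrightarrow F_i \longrightarrow K_i \longrightarrow 0, \qquad 0 \leq i \leq d-1,
\]
with $K_d = F_d$. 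Twisting by $\opn(t)$, summing over $t$, and using that the $F_i$ have no intermediate cohomology converts the long exact cohomology sequences into dimension-shifting isomorphisms, which drive both directions.

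For $(\Rightarrow)$, assume ${\rm hd}(E) \leq d$ and fix such a resolution. The vanishing $H^q_*(F_i) = H^{q+1}_*(F_i) = 0$, valid for $1 \leq q \leq n-2$, gives isomorphisms $H^q_*(K_i) \cong H^{q+1}_*(K_{i+1})$ in that range. Composing them for $i = 0, \dots, d-1$ yields, for $1 \leq q \leq n-d-1$,
\[
H^q_*(E) \;\cong\; H^{q+d}_*(K_d) \;=\; H^{q+d}_*(F_d),
\]
each intermediate step being legitimate thanks to $q \geq 1$ and $q + d - 1 \leq n-2$. As $F_d$ is a direct sum of line bundles and $0 < q + d < n$, the right-hand side vanishes, so $H^q_*(E) = 0$ for $1 \leq q \leq n-d-1$.

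For $(\Leftarrow)$ I construct a resolution tailored to the hypothesis. Taking each $F_i \twoheadrightarrow K_i$ from a system of homogeneous generators of the finitely generated graded module $H^0_*(K_i)$ makes $H^0_*(F_i) \to H^0_*(K_i)$ surjective; since $H^1_*(F_i) = 0$, this forces $H^1_*(K_{i+1}) = 0$, hence $H^1_*(K_j) = 0$ for every $j \geq 1$. The same sequences now yield downward isomorphisms $H^q_*(K_{i+1}) \cong H^{q-1}_*(K_i)$ for $2 \leq q \leq n-1$. Iterating these: for $d+1 \leq q \leq n-1$ one descends all the way to $K_0 = E$, obtaining $H^q_*(K_d) \cong H^{q-d}_*(E) = 0$ by hypothesis (since $1 \leq q-d \leq n-d-1$); for $2 \leq q \leq d$ one descends only until the cohomological degree drops to $1$, obtaining $H^q_*(K_d) \cong H^1_*(K_{d-q+1}) = 0$; and $H^1_*(K_d) = 0$ is already known. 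Hence $H^q_*(K_d) = 0$ for all $1 \leq q \leq n-1$, so Horrocks' criterion makes $K_d$ a direct sum of line bundles, and relabelling $F_d := K_d$ exhibits a resolution of $E$ of length $d$, i.e. ${\rm hd}(E) \leq d$.

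The forward implication is essentially a bookkeeping of dimension shifts; the substance lies in $(\Leftarrow)$. The main obstacle is that the dimension-shifting isomorphism breaks down at cohomological degree $1$ — there is no way to relate $H^1_*(K_d)$ to the cohomology of $E$ by shifting — so the hypothesis on $E$ alone cannot kill the degree-one cohomology of the top syzygy. This is precisely why the resolution must be chosen with $H^0_*$-surjective maps, which guarantees $H^1_* = 0$ at every syzygy, and why Horrocks' criterion is indispensable for converting the resulting total vanishing into the splitting that truncates the resolution.
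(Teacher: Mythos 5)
Your argument is correct, and it is essentially the same as the source's: the paper itself states this proposition without proof, citing \cite[Proposition 1.4]{BS}, and the Bohnhorst--Spindler argument is exactly your two-step scheme. Namely, dimension-shifting through the syzygy sheaves of a resolution by sums of line bundles gives $H^q_*(E)\cong H^{q+d}_*(F_d)=0$ for $1\le q\le n-d-1$, and conversely one builds the resolution with $H^0_*$-surjective maps (so that every syzygy has vanishing $H^1_*$), shifts the hypothesis on $E$ up to the $d$-th syzygy, and invokes Horrocks' splitting criterion to truncate the resolution at length $d$ --- your identification of the $H^0_*$-surjectivity as the point where a carelessly chosen resolution would fail is precisely the right one.
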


\begin{Proposition}\label{bsrk}
Let $E$ be a non splitting vector bundle on $\pn$. Then
$$ {\rm rk}(E) \geq n+1 - {\rm hd}(E).$$
\end{Proposition}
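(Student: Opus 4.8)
The plan is to translate this statement about the bundle $E$ into a statement about the finitely generated graded module $M := \Gamma_*(E) = \bigoplus_{t \in \ZZ} H^0(\pn, E(t))$ over the regular ring $R = \KK[x_0,\dots,x_n]$, and then to invoke a syzygy/rank theorem for modules. Set $d := {\rm hd}(E)$ and $r := \rk(E)$. Since $E$ does not split, Proposition \ref{bsprop} applied with $d=0$ (Horrocks' criterion) forces $d \ge 1$; and since the case $d = n-1$ of Proposition \ref{bsprop} is vacuous, every bundle has ${\rm hd}\le n-1$, so $1 \le d \le n-1$. Reading Proposition \ref{bsprop} simultaneously for $d$ and for $d-1$, the intermediate cohomology of $E$ vanishes exactly in the range $1 \le q \le n-d-1$, while $H^{n-d}_*(E) \ne 0$. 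Hence the smallest $q$ with $H^q_*(E) \ne 0$ is $q_0 := n-d$, and the desired bound $r \ge n+1-d$ is equivalent to $r \ge q_0 + 1$.

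First I would compute the depth of $M$. The module $M$ is reflexive and saturated, and because $\widetilde{M} = E$ is locally free it is free at every homogeneous prime $\mathfrak p \ne \mathfrak m := (x_0,\dots,x_n)$. Using the standard comparison between local and sheaf cohomology, $H^0_{\mathfrak m}(M) = H^1_{\mathfrak m}(M) = 0$ and $H^{i}_{\mathfrak m}(M) \cong H^{i-1}_*(E)$ for $2 \le i \le n+1$. Combined with the vanishing range above, this gives $H^i_{\mathfrak m}(M) = 0$ for all $i \le q_0$ and $H^{q_0+1}_{\mathfrak m}(M) \cong H^{q_0}_*(E) \ne 0$, whence $\operatorname{depth} M = q_0 + 1 = n+1-d$. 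Equivalently, by Auslander--Buchsbaum over the regular local ring $R_{\mathfrak m}$ of dimension $n+1$, one recovers ${\rm pd}_R M = d$, consistent with ${\rm hd}(E)=d$.

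Next I would upgrade this depth computation into a statement about syzygy order. Because $M_{\mathfrak p}$ is free (hence $\operatorname{depth} M_{\mathfrak p} = \operatorname{ht}\mathfrak p$) for every $\mathfrak p \ne \mathfrak m$, while $\operatorname{depth} M = n+1-d$ at $\mathfrak m$ itself, the module $M$ satisfies Serre's condition $\widetilde S_{\,n+1-d}$. By the Evans--Griffith characterization of syzygies, this is exactly the condition guaranteeing that $M$ is an $(n+1-d)$-th syzygy module. Finally I would apply the Evans--Griffith Syzygy Theorem: a non-free $k$-th syzygy module over a regular ring has rank at least $k$. Since $E$ does not split, $M$ is non-free, so $\operatorname{rank} M \ge n+1-d$; as $\operatorname{rank} M = \rk(E) = r$, this is the claim.

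The routine part is the local-cohomology bookkeeping of the second step, which is where Proposition \ref{bsprop} enters. The main obstacle is the final step: the Evans--Griffith Syzygy Theorem is the deep external input, and some care is needed to verify $\widetilde S_{\,n+1-d}$ cleanly, in particular that $M$ is genuinely free, not merely of full depth, at all primes other than $\mathfrak m$. As a consistency check, the case $d=1$ can be handled directly: there $E$ sits in $0 \to \oplus \OO_{\pn}(-b_i) \xrightarrow{A} \oplus \OO_{\pn}(-a_j) \to E \to 0$ with all entries of $A$ of positive degree, so $A$ is a nowhere-degenerate map between bundles whose relevant twist is ample, and the Fulton--Lazarsfeld nonemptiness theorem for degeneracy loci forces the corank to satisfy $r+1 > n$, i.e. $r \ge n = n+1-d$. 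This elementary argument does not iterate in $d$, since the syzygies of a low-rank bundle have large rank, which is precisely why I would route the general case through $M$ and Evans--Griffith rather than through the resolution maps directly.
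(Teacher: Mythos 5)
Your argument is correct, but there is nothing in the paper to compare it against: the authors do not prove this statement, they quote it verbatim as \cite[Corollary 1.7]{BS}. What you have written is essentially the standard proof of that cited result. The chain of reasoning -- pass to $M=\Gamma_*(E)$, use $H^0_{\mathfrak m}(M)=H^1_{\mathfrak m}(M)=0$ (saturation) together with $H^i_{\mathfrak m}(M)\cong H^{i-1}_*(E)$ for $i\ge 2$ and Proposition \ref{bsprop} read at both $d$ and $d-1$ to get $\operatorname{depth}M=n+1-{\rm hd}(E)$, observe that $M$ is free away from the irrelevant ideal so that $\widetilde S_{\,n+1-d}$ holds, identify $\widetilde S_k$-modules with $k$-th syzygies over the regular ring $R$, and conclude with the Evans--Griffith syzygy theorem -- is exactly how the rank bound is obtained in the literature, with Evans--Griffith as the one deep external input; your bookkeeping ($q_0=n-d$, $\operatorname{depth}M=q_0+1$, ${\rm pd}M=d$) is all accurate. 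Two minor points. You should state explicitly that $M$ is non-free \emph{because} $E$ does not split (a free graded module sheafifies to a direct sum of line bundles and conversely $\Gamma_*$ of a split bundle is free); this equivalence is what makes the strict hypothesis of the syzygy theorem available. Also, in your $d=1$ sanity check the Fulton--Lazarsfeld theorem requires $F_1^\lor\otimes F_0$ to be ample, i.e.\ all twists $b_i-a_j>0$, which minimality alone does not force (line-bundle summands of $E$ may occur and must be split off first); since that paragraph is only a consistency check and plays no role in the main argument, this does not affect the proof.
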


We now recall the notion of \emph{cokernel bundles} and \emph{Steiner bundles}, as defined respectively in \cite{bramb} and \cite{miro-soar}.
\begin{Definition}\label{def-cok}
Let $E_0$ and $E_1$ be two vector bundles on $\PP^n$, with $n\geq2$. A cokernel bundle of type $(E_0,E_1)$ on $\PP^n$ is a vector bundle $C$ with resolution of the form
$$
0 \longrightarrow E_0^a \longrightarrow E_1^b \longrightarrow C \longrightarrow 0
$$
where $b \rk E_1 - a \rk E_0 \geq n$, with $a,b \in \mathbb{N}$, and $E_0$, $E_1$ satisfy the following conditions:
\begin{itemize}
\item $E_0$ and $E_1$ are simple;
\item $\Hom(E_1,E_0) = 0$;
\item $\Ext^1(E_1,E_0) = 0$;
\item the bundle $E_0^\lor \otimes E_1$ is globally generated;
\item $W = \Hom(E_0,E_1)$ has dimension $w \geq 3$.
\end{itemize}
If, moreover,
$$
Ext^i(E_1,E_0) = 0, \:\:\mbox{for each}\:\: i \geq 2
$$
and
$$
Ext^i(E_0,E_1) = 0, \:\:\mbox{for each}\:\: i \geq 1,
$$
the pair $(E_0,E_1)$ is called strongly exceptional and the bundle $C$ is called a Steiner bundle of type $(E_0,E_1)$ on $\PP^n$.

\end{Definition}
\section{Simplicity of syzygy bundles}

In this section we will give some results that ensure the simplicity of the syzygy bundles of the following pure resolution
%of type (\ref{respura}). Let us first fix some notation.\\
%Consider the following pure resolution
\begin{equation}\label{seq-orig}
\xymatrix{0 \ar[r] & \OO_{\PP^n}^{\beta_{n+1}}(-d_{n+1}) \ar[r] & \OO_{\PP^n}^{\beta_{n}}(-d_{n}) \ar[r]  \cdots\ar[r] & \OO_{\PP^n}^{\beta_{1}}(-d_{1}) \ar[r] & \OO_{\PP^n}^{\beta_0} \ar[r] & 0}
\end{equation} with $d_1<d_2<\ldots<d_{n+1}$ and $d_i>0$ for each $i$, which splits in short exact sequences

\begin{equation}\label{short-orig}
\begin{array}{c}
0 \longrightarrow \OO_{\PP^n}^{\beta_{n+1}}(-d_{n+1}) \longrightarrow \OO_{\PP^n}^{\beta_{n}}(-d_{n}) \longrightarrow G_1 \longrightarrow 0\\
\vdots\\
0 \longrightarrow G_i \longrightarrow \OO_{\PP^n}^{\beta_{n-i}}(-d_{n-i}) \longrightarrow G_{i+1} \longrightarrow 0\\
\vdots\\
0 \longrightarrow G_{n-1} \longrightarrow \OO_{\PP^n}^{\beta_{1}}(-d_{1}) \longrightarrow \OO_{\PP_n}^{\beta_0} \longrightarrow 0
\end{array}
\end{equation}
We will also consider the dual resolution of (\ref{seq-orig}) and tensor it by $\OO_{\PP^n}(-d_{n+1})$, obtaining
\begin{equation}\label{seq-dual}
\xymatrix{
0 \ar[r] & \OO_{\PP^n}(-d_{n+1})^{\beta_0} \ar[r] & \OO_{\PP^n}^{\beta_{1}}(d_1-d_{n+1}) \ar[r] & \cdots \ar[r]& \OO_{\PP^n}^{\beta_{n}}(d_{n}-d_{n+1}) \ar[r] & \OO_{\PP^n}^{\beta_{n+1}} \ar[r] & 0
}
\end{equation}
which splits as
\begin{equation}\label{short-dual}
\begin{array}{c}
0 \longrightarrow \OO_{\PP^n}^{\beta_0}(-d_{n+1}) \longrightarrow \OO_{\PP^n}^{\beta_{1}}(d_1-d_{n+1}) \longrightarrow F_1 \longrightarrow 0\\
\vdots\\
0 \longrightarrow F_j \longrightarrow \OO_{\PP^n}^{\beta_{j+1}}(d_{j+1}-d_{n+1}) \longrightarrow F_{j+1} \longrightarrow 0\\
\vdots\\
0 \longrightarrow F_{n-1} \longrightarrow \OO_{\PP^n}^{\beta_{n}}(d_{n}-d_{n+1}) \longrightarrow \OO_{\PP_n}^{\beta_{n+1}} \longrightarrow 0
\end{array}
\end{equation}
Let us notice that we have supposed, without loss of generality, that $d_0=0$; else we can tensor the resolution (\ref{primares}) by $\OO_{\PP^n}(-d_0)$ in order to obtain a new resolution as in (\ref{seq-orig}).
Let us prove now some results which ensure the simplicity of the bundles $F_i$, for $i$ from 1 to $n-1$, in particular, the next theorem tell us when the syzigies are simple only looking at the first or the last Betti number.

\begin{Theorem}\label{thm-b0}
Consider a pure resolution as in (\ref{seq-orig}). If $\beta_0 = 1$ or $\beta_{n+1}=1$, then all bundles $F_i$, for $i$ from 1 to $n-1$, are simple.
\end{Theorem}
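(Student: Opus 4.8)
The plan is to show that $\Hom(F_i,F_i)=\KK$ for $1\le i\le n-1$, i.e. that $F_i$ has no endomorphisms besides the homotheties. I would first remove one of the two hypotheses. Dualising the sequences (\ref{short-orig}) and twisting by $\OO_{\PP^n}(-d_{n+1})$ yields $F_j\cong G_{n-j}^\lor(-d_{n+1})$; as simplicity is preserved by dualising and twisting, the $F_i$ are simple exactly when the $G_i$ are, and this operation exchanges the two ends of the resolution, hence the hypotheses $\beta_0=1$ and $\beta_{n+1}=1$. I may thus assume $\beta_0=1$ and argue for the $F_i$, the case $\beta_{n+1}=1$ being identical for the $G_i$. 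Splicing the first sequences of (\ref{short-dual}) then presents $F_i$ by a minimal locally free resolution
\begin{equation*}
P_\bullet:\quad 0\to\OO_{\PP^n}(-d_{n+1})\xrightarrow{\ \partial_1\ }\OO_{\PP^n}^{\beta_1}(d_1-d_{n+1})\xrightarrow{\ \partial_2\ }\cdots\xrightarrow{\ \partial_i\ }\OO_{\PP^n}^{\beta_i}(d_i-d_{n+1})\to F_i\to 0
\end{equation*}
whose left end is the single line bundle $\OO_{\PP^n}(-d_{n+1})$.

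The core of the argument is to compute $\Hom(F_i,F_i)$ from $P_\bullet$. Each term of $P_\bullet$ is a multiple of one line bundle, so any endomorphism of the resolution is a tuple $(h_0,\dots,h_i)$ of constant matrices $h_k\in\mathrm{Mat}_{\beta_k}(\KK)$ commuting with the differentials, $\partial_k h_{k-1}=h_k\partial_k$; and since $d_1<\cdots<d_{n+1}$, any homotopy would be a matrix of forms of strictly negative degree and hence zero. The identification of $\Hom(F_i,F_i)$ with such commuting tuples requires the vanishing of the groups $\Ext^1(\OO_{\PP^n}^{\beta_k}(d_k-d_{n+1}),F_{k'})\cong H^1_*(F_{k'})^{\beta_k}$ governing the existence and uniqueness of the lift. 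For this I would establish the concentration of intermediate cohomology: combining the bound ${\rm hd}(F_j)\le j$ with Proposition \ref{bsprop}, together with its Serre dual coming from ${\rm hd}(F_j^\lor)\le n-j$, gives $H^q_*(F_j)=0$ for all $0<q<n$ except possibly $q=n-j$. Since only the syzygies $F_{k'}$ with $k'\le i-1\le n-2$ occur in $P_\bullet$, one has $H^1_*(F_{k'})=0$ throughout and the identification is valid.

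Granting this reduction, I would prove that every commuting tuple is scalar by propagating from the rank-one end. Because $\beta_0=1$, the matrix $h_0$ is a single scalar $\lambda$. The relation $\partial_1 h_0=h_1\partial_1$ reads $\lambda\partial_1=h_1\partial_1$, and the single column $\partial_1$ is made of the degree-$d_1$ forms defining the resolution, which are $\KK$-linearly independent by minimality; this forces $h_1=\lambda I$. Proceeding up the resolution, at each step the relation $(h_k-\lambda I)\partial_k=0$ should force $h_k=\lambda I$, so that in the end $h_i=\lambda I$ and the endomorphism is $\lambda\,\mathrm{id}_{F_i}$.

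The main obstacle is precisely this propagation through the middle. The equation $(h_k-\lambda I)\partial_k=0$ only says that the constant matrix $h_k-\lambda I$ annihilates the image of $\partial_k$, a lower syzygy sitting inside $\OO_{\PP^n}^{\beta_k}(d_k-d_{n+1})$; to conclude $h_k=\lambda I$ one needs this image to fill the fibres generically, which is not automatic at a jump of the Betti numbers and, as reduction modulo the irrelevant ideal shows, is not formal in minimality alone. This is where I expect the cohomology concentration and the strict gaps $d_k<d_{k+1}$ to be indispensable, forcing the only possible defect --- an element of a $\Hom$ group between a syzygy $F_{k'}$ and a line bundle --- to vanish. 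The two ends of the resolution are easy; it is this middle rigidity, anchored by the rank-one hypothesis, that carries the real content of the theorem.
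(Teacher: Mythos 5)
Your framework --- reduce to $\beta_0=1$ by the duality $F_j\cong G_{n-j}^\lor(-d_{n+1})$, lift an endomorphism of $F_i$ to a chain endomorphism $(h_0,\dots,h_i)$ of the minimal resolution $P_\bullet$, kill homotopies by the degree gaps $d_k<d_{k+1}$, and propagate the scalar $\lambda=h_0$ up the resolution --- is coherent, and the pieces you do establish (the concentration $H^q_*(F_j)=0$ for $q\neq n-j$, $0<q<n$, hence the vanishing of the obstruction groups for the lift; the base case $h_1=\lambda I$ from the $\KK$-linear independence of the minimal generators when $\beta_0=1$) are correct. But the proposal stops exactly at the step that carries the theorem. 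The relation $(h_k-\lambda I)\partial_k=0$ says that the constant matrix $h_k-\lambda I$ kills $\operatorname{im}\partial_k=F_{k-1}\subset\OO_{\PP^n}^{\beta_k}(d_k-d_{n+1})$, i.e.\ that $h_k-\lambda I$ factors through $F_k$; so what you need is precisely
$$
\Hom\bigl(F_k,\OO_{\PP^n}(d_k-d_{n+1})\bigr)=H^0\bigl(F_k^\lor(d_k-d_{n+1})\bigr)=0 ,
$$
and you only write that you \emph{expect} the cohomology concentration and the gaps $d_k<d_{k+1}$ to force this. As written, that is a genuine gap, not a proof: the concentration statement you proved concerns $H^q_*$ for $0<q<n$ and says nothing about $H^0$ of a twist of $F_k^\lor$, so no result you have established implies the vanishing.

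The missing vanishing is provable, and it is in fact the engine of the paper's own argument: using the sequences $0\to F^\lor_{j+1}(d_k-d_{n+1})\to\OO_{\PP^n}^{\beta_{j+1}}(d_k-d_{j+1})\to F^\lor_{j}(d_k-d_{n+1})\to 0$ one walks down the right-hand tail of the resolution to get
$$
H^0(F_k^\lor(d_k-d_{n+1}))\simeq H^1(F_{k+1}^\lor(d_k-d_{n+1}))\simeq\cdots\simeq H^{n-1-k}(F_{n-1}^\lor(d_k-d_{n+1}))=0 ,
$$
the last group vanishing because $F_{n-1}^\lor(d_k-d_{n+1})$ is presented by line bundles with no cohomology in degrees $n-1-k$ and $n-k$ for $1\le k\le n-1$. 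Inserting this chain would close your argument. Note, though, that once one has this vanishing the paper dispenses with the resolution-lifting machinery altogether: it feeds it into the long exact sequence of $0\to F_{i-1}\otimes F_i^\lor\to(F_i^\lor)^{\beta_i}(d_i-d_{n+1})\to F_i\otimes F_i^\lor\to 0$ to get $H^0(F_i\otimes F_i^\lor)\hookrightarrow H^1(F_{i-1}\otimes F_i^\lor)\hookrightarrow\cdots\hookrightarrow H^i(F_i^\lor(-d_{n+1}))\simeq\KK^{\beta_0}$, which gives simplicity directly when $\beta_0=1$. So your route is a viable alternative, but only after you supply the very lemma you deferred.
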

\begin{proof}
Let us consider first the case $\beta_0=1$, whose importance will be explained by Corollary \ref{cor-ACM}.\\
Let us prove first that the bundle $F_1$ is simple.\\
Consider the exact sequence, obtained by (\ref{short-dual}),
$$
0 \longrightarrow (F^\lor_1)^{\beta_0}(-d_{n+1}) \longrightarrow (F^\lor_1)^{\beta_{1}}(d_1-d_{n+1}) \longrightarrow F^\lor_1 \otimes F_1 \longrightarrow 0
$$
which induces the long exact sequence in cohomology
$$
0 \longrightarrow H^0((F_1^\lor)^{\beta_0}(-d_{n+1})) \longrightarrow H^0((F_1^\lor)^{\beta_1}(d_1-d_{n+1})) \longrightarrow H^0(F_1^\lor \otimes F_1) \longrightarrow H^1((F^\lor_1)^{\beta_0}(-d_{n+1})) \longrightarrow \cdots
$$
Taking again the short exact sequences
$$
0 \longrightarrow F^\lor_{i+1}(d_1-d_{n+1}) \longrightarrow \OO_{\PP^n}^{\beta_{i+1}}(d_1-d_{i+1}) \longrightarrow F^\lor_{i}(d_1-d_{n+1}) \longrightarrow 0,
$$
for $i$ from 1 to $n-2$, we obtain the following chain of isomorphisms
$$
H^0(F_1^\lor(d_1-d_{n+1})) \simeq H^1(F_2^\lor(d_1-d_{n+1})) \simeq \cdots \simeq H^{n-2}(F_{n-1}^\lor(d_1-d_{n+1}))= 0,
$$
Combining these two results, we get an injective map $$H^0(F_1^\lor \otimes F_1) \hookrightarrow H^1((F_1^\lor)^{\beta_0}(-d_{n+1})).$$\\
Consider the short exact sequence
\begin{equation}\label{firstcohom}
0 \longrightarrow F_1^\lor(-d_{n+1}) \longrightarrow \OO^{\beta_1}_{\PP^n}(-d_1) \longrightarrow \OO_{\PP^n}^{\beta_0} \longrightarrow 0
\end{equation}
from which it is straightforward to obtain $\beta_0 = h^1\left(F_1^\lor(-d_{n-1})\right)$; this implies, in the case $\beta_0 = 1$, that the $F_1$ is a simple bundle.\\
Let us prove now that each bundle $F_i$ is simple, for $i$ from $2$ to $n-1$.\\
Consider the following exact sequence
\begin{equation}\label{step-i}
0 \longrightarrow F_{i-1} \otimes F_i^\lor \longrightarrow (F_i^\lor)^{\beta_i}(d_i-d_{n+1}) \longrightarrow F_i \otimes F_i^\lor \longrightarrow 0
\end{equation}
Taking the exact sequences of type
$$
0 \longrightarrow F^\lor_{i+1}(d_i-d_{n+1}) \longrightarrow \OO_{\PP^n}^{\beta_{i+1}}(d_i-d_{i+1}) \longrightarrow F^\lor_{i}(d_i-d_{n+1}) \longrightarrow 0
$$
for $i$ from 1 to $n-2$, and their induced long exact sequence in cohomology, we get a chain of isomorphisms of type
\begin{equation}\label{isom}
H^0(F_i^\lor(d_i-d_{n+1})) \simeq H^1(F_{i+1}^\lor(d_i-d_{n+1})) \simeq \cdots \simeq H^{n-1-i}(F_{n-1}^\lor(d_i-d_{n+1}) = 0.
\end{equation}
Therefore, inducing the long exact sequence in cohomology of (\ref{step-i}), we have an inclusion of type $$H^0(F_i\otimes F_i^\lor) \hookrightarrow H^1(F_{i-1}\otimes F_i^\lor).$$
Proceeding step by step, lowering by one the value of $i$, and using similar isomorphisms as in (\ref{isom}),  that are consequence of the short exact sequences in (\ref{short-dual}), we manage to obtain the following inclusions
$$
H^1(F_{i-1}\otimes F_i^\lor) \hookrightarrow H^2(F_{i-2}\otimes F_i^\lor)\hookrightarrow \cdots \hookrightarrow H^{i-1}(F_{1}\otimes F_i^\lor) \hookrightarrow H^i(F_i^\lor(-d_{n+1})).
$$
In order to compute the last cohomology group, we consider, as before, the exact sequences of the following form
$$
0 \longrightarrow F^\lor_{i+1}(-d_{n+1}) \longrightarrow \OO_{\PP^n}^{\beta_{i+1}}(-d_{i+1}) \longrightarrow F^\lor_{i}(-d_{n+1}) \longrightarrow 0
$$
for $i$ from 1 to $n-2$, obtaining
$$
H^i(F_i^\lor(-d_{n+1})) \simeq H^{i-1}(F_{i-1}^\lor(-d_{n+1})) \simeq \cdots \simeq H^1(F_1^\lor(-d_{n+1})) \simeq \mathbb{C}.
$$
This proves that the bundle $F_i$ is simple.\\
The case $\beta_{n+1}=1$ can be proved, by duality, applying the same technique. Indeed, we can define $\tilde{d}_i = d_{n+1} - d_{n+1-i}$ and, dualizing the resolution (\ref{seq-dual}) and tensoring by $\OO_{\PP_n}(-\tilde{d}_{n+1})$ we obtain a new resolution of the form
\begin{equation}
\xymatrix{
0 \ar[r] & \OO_{\PP^n}(-\tilde{d}_{n+1}) \ar[r] & \OO_{\PP^n}^{\beta_{n}}(\tilde{d}_1-\tilde{d}_{n+1}) \ar[r] &  \cdots \ar[r] & \OO_{\PP^n}^{\beta_{1}}(\tilde{d}_{n}-\tilde{d}_{n+1}) \ar[r] & \OO_{\PP^n}^{\beta_0} \ar[r] & 0
}
\end{equation}
where, as before, the integers $\tilde{d}_i$ satisfy $\tilde{d}_{n+1} > \tilde{d}_n > \ldots > \tilde{d}_1 >0$ and we apply the previous technique.
\end{proof}

As a corollary, we have the following.

\begin{Corollary}\label{cor-ACM}
Consider a quotient ring $A = R/I$ where $I$ is Artinian module and its pure resolution. Then each vector bundle $F_i$, arising from the splitting of the resolution in short exact sequences, is simple.
\end{Corollary}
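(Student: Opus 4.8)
The plan is to reduce the statement to Theorem \ref{thm-b0} by checking that its hypothesis $\beta_0 = 1$ is automatically satisfied here. The key observation is that $A = R/I$ is a \emph{cyclic} graded $R$-module, generated by the image of $1 \in R$ in degree $0$. Hence the minimal free resolution of $A$ begins with a single free generator in degree $0$; that is, the rightmost nonzero term is $R^{\beta_0}(-d_0) = R$, forcing $\beta_0 = 1$ and $d_0 = 0$ --- exactly the normalization already built into (\ref{seq-orig}).

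I would then confirm that the resolution has the length needed to match (\ref{seq-orig}), namely $p = n+1$. Since $A$ is Artinian it has depth $0$, so the Auslander--Buchsbaum formula $\operatorname{pd}(A) + \operatorname{depth}(A) = \operatorname{depth}(R) = n+1$ gives $\operatorname{pd}(A) = n+1$. Sheafifying the pure resolution of $A$ therefore produces a complex of the shape (\ref{primares}), which after the normalization $d_0 = 0$ is precisely (\ref{seq-orig}); its associated dual splitting (\ref{short-dual}) defines the bundles $F_i$ for $i$ from $1$ to $n-1$.

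With these two points settled the conclusion is immediate: Theorem \ref{thm-b0} applied in the case $\beta_0 = 1$ shows that every $F_i$ is simple. If one also wants the syzygy bundles $G_i$ of the original splitting (\ref{short-orig}), I would observe that each $G_i$ coincides, up to dualizing and twisting by a line bundle, with one of the $F_j$; since both operations preserve the dimension of the endomorphism space, simplicity transfers directly. The only real work is this numerical bookkeeping --- that a cyclic Artinian quotient forces $\beta_0 = 1$ and $p = n+1$ --- and once it is in place Theorem \ref{thm-b0} does all the cohomological heavy lifting, so there is no genuine obstacle.
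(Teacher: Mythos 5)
Your proposal is correct and follows the same route as the paper: observe that the cyclic quotient $A=R/I$ forces $\beta_0=1$ and then invoke Theorem \ref{thm-b0}. The extra bookkeeping you supply (Auslander--Buchsbaum to get $\operatorname{pd}(A)=n+1$, and the transfer of simplicity from the $F_i$ to the $G_i$ by dualizing and twisting) is sound and merely makes explicit what the paper leaves implicit.
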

\begin{proof}
Since $A$ is a quotient, we have that $\beta_0 =1$, then we can apply the previous theorem and obtain that all the bundles $F_i$ are simple.
\end{proof}

%%%%%%%%%%%%%%%%%%%%%%%%%%%%%%%%%%%%%%%%%%%%%%%%%%%%%%%%%%%%%%%%%%%%%%%%%%%%%%%%%%%%%%%%%%%%%%%%%%%%%%%%%%%%%%%%
With the next lemmas, we give an explicit description and boundaries for the Betti number arising in the resolution we are considering, which will be useful to prove a different theorem about simplicity of the syzygies.

\begin{Lemma} \label{lem-coeff}
The syzygies in the short exact sequences (\ref{short-dual}) satisfies
$$
h^{0}(F_{i-1}^\lor(d_i-d_{n+1})) = \beta_i, \; for \; i =2, \cdots,n.
$$
\end{Lemma}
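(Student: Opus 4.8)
The plan is to read off the claimed equality from a single short exact sequence, reducing it to two cohomology vanishings that are already within reach by the dimension-shifting technique used for Theorem \ref{thm-b0}. Fix $i\in\{2,\dots,n\}$ and start from the $(i-1)$-th short exact sequence of (\ref{short-dual}),
$$0 \longrightarrow F_{i-1} \longrightarrow \OO_{\PP^n}^{\beta_i}(d_i-d_{n+1}) \longrightarrow F_i \longrightarrow 0,$$
with the convention $F_n := \OO_{\PP^n}^{\beta_{n+1}}$ so that $i=n$ is the last sequence of (\ref{short-dual}). Since all the sheaves involved are locally free, dualizing is exact; dualizing and then twisting by $\OO_{\PP^n}(d_i-d_{n+1})$ yields
$$0 \longrightarrow F_i^\lor(d_i-d_{n+1}) \longrightarrow \OO_{\PP^n}^{\beta_i} \longrightarrow F_{i-1}^\lor(d_i-d_{n+1}) \longrightarrow 0.$$
Passing to the long exact sequence in cohomology, the map $H^0(\OO_{\PP^n}^{\beta_i}) \to H^0(F_{i-1}^\lor(d_i-d_{n+1}))$ is injective as soon as $H^0(F_i^\lor(d_i-d_{n+1}))=0$, and surjective as soon as $H^1(F_i^\lor(d_i-d_{n+1}))=0$; in that situation it is an isomorphism and gives $h^0(F_{i-1}^\lor(d_i-d_{n+1})) = h^0(\OO_{\PP^n}^{\beta_i}) = \beta_i$, which is exactly the assertion.

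It remains to prove the two vanishings. The first, $H^0(F_i^\lor(d_i-d_{n+1}))=0$, is precisely the content of the chain of isomorphisms (\ref{isom}) established in the proof of Theorem \ref{thm-b0} for $2\le i\le n-1$; for $i=n$ it is immediate, since $F_n^\lor(d_n-d_{n+1}) = \OO_{\PP^n}^{\beta_{n+1}}(d_n-d_{n+1})$ has no sections because $d_n<d_{n+1}$. For the second, $H^1(F_i^\lor(d_i-d_{n+1}))=0$, I would run the completely analogous chain one cohomological degree higher. Dualizing and twisting the sequences of (\ref{short-dual}) gives, for the relevant $j$,
$$0 \longrightarrow F_{j+1}^\lor(d_i-d_{n+1}) \longrightarrow \OO_{\PP^n}^{\beta_{j+1}}(d_i-d_{j+1}) \longrightarrow F_j^\lor(d_i-d_{n+1}) \longrightarrow 0,$$
and since the middle term is a sum of line bundles whose intermediate cohomology $H^q$ vanishes for all $1\le q\le n-1$, the connecting maps produce isomorphisms
$$H^1(F_i^\lor(d_i-d_{n+1})) \simeq H^2(F_{i+1}^\lor(d_i-d_{n+1})) \simeq \cdots \simeq H^{n-i}(F_{n-1}^\lor(d_i-d_{n+1})).$$
Finally I would compute the right-hand group from the last dualized-and-twisted sequence $0 \to \OO_{\PP^n}^{\beta_{n+1}}(d_i-d_{n+1}) \to \OO_{\PP^n}^{\beta_n}(d_i-d_n) \to F_{n-1}^\lor(d_i-d_{n+1}) \to 0$: for $2\le i\le n-1$ the two bounding groups $H^{n-i}(\OO_{\PP^n}^{\beta_n}(d_i-d_n))$ and $H^{n-i+1}(\OO_{\PP^n}^{\beta_{n+1}}(d_i-d_{n+1}))$ sit in cohomological degrees strictly between $0$ and $n$, hence vanish, forcing $H^{n-i}(F_{n-1}^\lor(d_i-d_{n+1}))=0$ and therefore $H^1(F_i^\lor(d_i-d_{n+1}))=0$.

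The conceptual core is exactly the dimension-shifting already exploited in Theorem \ref{thm-b0}; the only thing that requires care, and what I expect to be the main obstacle, is the bookkeeping of index ranges and boundary cases: checking that at each step the cohomological degree of the trivial summands $\OO_{\PP^n}(d_i-d_j)$ really lands in the middle range $[1,n-1]$ where all such cohomology vanishes, and separately disposing of the extreme value $i=n$ as well as small $n$ (e.g. $n=2$, where no intermediate index $i$ occurs at all).
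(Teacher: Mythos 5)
Your proposal is correct and follows essentially the same route as the paper: both start from the dualized and twisted short exact sequence $0 \to F_i^\lor(d_i-d_{n+1}) \to \OO_{\PP^n}^{\beta_i} \to F_{i-1}^\lor(d_i-d_{n+1}) \to 0$ and read the claim off the long exact sequence in cohomology once $H^0$ and $H^1$ of the kernel term are controlled. The paper merely asserts ``one can check that $h^{0}(F_i^\lor(d_i -d_{n+1})) = h^1(F_i^\lor(d_i-d_{n+1}))$,'' whereas you verify the stronger fact that both groups vanish via the same dimension-shifting used in Theorem \ref{thm-b0}, and you additionally dispose of the boundary case $i=n$ explicitly.
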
\label{tits-h}
\begin{proof} Twisting the short exact sequences we have
$$
\xymatrix{0 \ar[r] & F^{\vee}_i (d_i-d_{n+1}) \ar[r] & \OO_{\mathbb{P}^n}^{\beta_i} \ar[r] & F^{\vee}_{i-1}{(d_i-d_{n+1})} \ar[r] & 0 }.
$$
One can check that
$$ h^{0}(F_i^\lor(d_i -d_{n+1})) = h^1(F_i^\lor(d_i-d_{n+1}))$$ and the result follows.
\end{proof}

\begin{Lemma}\label{hd} Consider the syzygies $G_i$ and $F_i$ from the short exact sequences (\ref{short-orig}) and (\ref{short-dual}). Then hd$(G_i) = $hd$(F_i) = i$, for $i=1, \cdots, n-1$.

\end{Lemma}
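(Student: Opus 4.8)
The plan is to establish the two bounds $\mathrm{hd} \le i$ and $\mathrm{hd} \ge i$ separately, and to deduce the statements for $G_i$ and $F_i$ from the same argument applied to (\ref{short-orig}) and (\ref{short-dual}) respectively. For the upper bound I would simply splice together the first $i$ short exact sequences in (\ref{short-orig}): the concatenation exhibits $G_i$ as the cokernel of a length-$i$ complex of direct sums of line bundles,
$$
0 \longrightarrow \OO_{\PP^n}^{\beta_{n+1}}(-d_{n+1}) \longrightarrow \cdots \longrightarrow \OO_{\PP^n}^{\beta_{n-i+1}}(-d_{n-i+1}) \longrightarrow G_i \longrightarrow 0,
$$
so that $\mathrm{hd}(G_i) \le i$ by the very definition of homological dimension; the same splicing in (\ref{short-dual}) yields $\mathrm{hd}(F_i) \le i$.

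For the lower bound I would invoke Proposition \ref{bsprop}: since $\mathrm{hd}(G_i) \le i-1$ is equivalent to $H^q_*(G_i)=0$ for all $1 \le q \le n-i$, it suffices to prove that $H^{n-i}_*(G_i) \ne 0$. The key device is the vanishing of the intermediate cohomology of line bundles, $H^q_*(\OO_{\PP^n}(t)) = 0$ for $1 \le q \le n-1$. Feeding this into the long exact sequence associated to $0 \to G_j \to \OO_{\PP^n}^{\beta_{n-j}}(-d_{n-j}) \to G_{j+1} \to 0$ produces isomorphisms $H^q_*(G_j) \cong H^{q-1}_*(G_{j+1})$ in the range $2 \le q \le n-1$. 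Chaining these, starting from $q = n-i$ and lowering the cohomological degree and raising the syzygy index in step, I would obtain
$$
H^{n-i}_*(G_i) \cong H^{n-i-1}_*(G_{i+1}) \cong \cdots \cong H^1_*(G_{n-1}),
$$
reducing the whole question to the nonvanishing of $H^1_*(G_{n-1})$.

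Finally I would compute $H^1_*(G_{n-1})$ from the last sequence $0 \to G_{n-1} \to \OO_{\PP^n}^{\beta_1}(-d_1) \to \OO_{\PP^n}^{\beta_0} \to 0$: its cohomology sequence identifies $H^1_*(G_{n-1})$ with the cokernel of $H^0_*(\OO_{\PP^n}^{\beta_1}(-d_1)) \to H^0_*(\OO_{\PP^n}^{\beta_0})$, and in degree $0$ the source vanishes because $d_1 > 0$ while the target is $\KK^{\beta_0} \ne 0$; hence $H^1_*(G_{n-1}) \ne 0$ and $\mathrm{hd}(G_i) = i$. For $F_i$ the identical chain of isomorphisms terminates at $H^1_*(F_{n-1})$, computed from $0 \to F_{n-1} \to \OO_{\PP^n}^{\beta_n}(d_n-d_{n+1}) \to \OO_{\PP^n}^{\beta_{n+1}} \to 0$, where the degree-$0$ part of the source vanishes because $d_n < d_{n+1}$; this again forces $H^1_*(F_{n-1}) \ne 0$ and $\mathrm{hd}(F_i) = i$. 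I expect the only genuine subtlety to be the bookkeeping of the admissible range $2 \le q \le n-1$, so that the chain of isomorphisms lands exactly on $H^1$ of the final syzygy, where the elementary degree count settles the nonvanishing.
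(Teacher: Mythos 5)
Your proof is correct and rests on the same two ingredients as the paper's: the Bohnhorst--Spindler criterion (Proposition \ref{bsprop}) and the chain of isomorphisms $H^{q}_{*}(G_j)\cong H^{q-1}_{*}(G_{j+1})$ coming from the vanishing of the intermediate cohomology of line bundles applied to the short exact sequences. The only difference is organizational: the paper runs an induction on $i$ and extracts the nonvanishing of $H^{n-i+1}_{*}(F_{i-1})$ from the inductive hypothesis via Proposition \ref{bsprop}, whereas you unwind the whole chain down to $H^{1}_{*}$ of the last syzygy and exhibit the nonvanishing explicitly in degree $0$ --- which also supplies the base case that the paper merely declares ``clear''.
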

\begin{proof} Let us prove for $F_i$. The case of $G_i$ is analogous. We prove it by induction on $i$. From the short exact sequences (\ref{short-dual}), it is clear that hd$(F_1) = 1$. Let us suppose that hd$(F_{i-1}) = i-1$. We know that hd$(F_i) \leq i$. Suppose hd$(F_i) \leq i-1$. By Proposition \ref{bsprop},
$$H^{q}_{*}(F_i) = 0, \forall\; 1 \leq q \leq n-i.$$ Since $H^{n-i}_{*}(F_i) \simeq H^{n-i+1}_{*}(F_{i-1})$ from the sequences (\ref{short-dual}), and hd$(F_{i-1}) = i-1$, there exists $t \in \mathbb{Z}$ such that $H^{n-i+1}(F_{i-1}(t)) \neq 0$. Therefore hd$(F_i) = i$.

\end{proof}

\begin{Lemma}\label{betti} The Betti numbers $\beta_i$ from the sequence (\ref{seq-orig}) satisfy the inequalities

$$\left.\begin{array}{rcll}
\beta_1 - \beta_0 & \geq & n & \\
\beta_i & \geq & 2n-2i+3, & \mbox{for} \;\;  2  \leq i  \leq \frac{n+1}{2} \\
\beta_i & \geq & 2i+1, & \mbox{for} \;\; \frac{n+1}{2} \leq i \leq n-1\\
\beta_n - \beta_{n+1} & \geq & n &
\end{array}\right.$$ In particular, $\beta_i \geq 3$, for $2 \leq i \leq n-1$.
\end{Lemma}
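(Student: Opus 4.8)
The plan is to combine the homological-dimension computation of Lemma \ref{hd} with the rank estimate of Proposition \ref{bsrk}, reading off ranks from the two families of short exact sequences (\ref{short-orig}) and (\ref{short-dual}). First I would observe that, by Lemma \ref{hd}, each $F_i$ and each $G_i$ with $1 \leq i \leq n-1$ has $\mathrm{hd}=i\geq 1$, hence is non-splitting (a direct sum of line bundles would have homological dimension $0$). Proposition \ref{bsrk} then applies to each of them and yields the uniform bounds
\begin{equation*}
\rk(F_i) \geq n+1-i, \qquad \rk(G_i) \geq n+1-i, \qquad 1 \leq i \leq n-1 .
\end{equation*}

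Next I would extract rank relations from the defining sequences. The top row of (\ref{short-dual}) gives $\rk(F_1)=\beta_1-\beta_0$ and the top row of (\ref{short-orig}) gives $\rk(G_1)=\beta_n-\beta_{n+1}$; since both ranks are $\geq n$, the two outer inequalities $\beta_1-\beta_0\geq n$ and $\beta_n-\beta_{n+1}\geq n$ follow at once. For the interior numbers I would use additivity of ranks in the middle rows: for $2\leq i\leq n-1$,
\begin{equation*}
\beta_i = \rk(F_{i-1})+\rk(F_i) = \rk(G_{n-i})+\rk(G_{n-i+1}),
\end{equation*}
the first equality coming from (\ref{short-dual}) (the row with $\OO_{\PP^n}^{\beta_i}$ in the middle) and the second from (\ref{short-orig}) after the substitution giving $\beta_{n-(n-i)}=\beta_i$ as the middle term. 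Feeding the rank bounds into the first expression produces $\beta_i \geq (n-i+2)+(n-i+1)=2n-2i+3$, and into the second produces $\beta_i \geq (i+1)+i=2i+1$.

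The final step is bookkeeping: both interior inequalities hold throughout $2\leq i\leq n-1$, and since $2n-2i+3\geq 2i+1$ exactly when $i\leq \frac{n+1}{2}$, recording the larger of the two in each range gives precisely the stated pair of estimates. The consequence $\beta_i\geq 3$ is then immediate, because $2n-2i+3\geq 3$ (as $i\leq n-1<n$) and $2i+1\geq 3$ (as $i\geq 2$) hold simultaneously on the interior range. I expect the only genuinely delicate point to be the index bookkeeping: one must verify that the substitution keeps both $G_{n-i},G_{n-i+1}$ (and likewise $F_{i-1},F_i$) inside the range $\{1,\dots,n-1\}$ on which Lemma \ref{hd} and hence the rank bound are available, which is exactly what forces the interior range $2\leq i\leq n-1$; the non-splitting hypothesis required by Proposition \ref{bsrk} is automatic since $\mathrm{hd}=i\geq 1$.
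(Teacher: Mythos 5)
Your proposal is correct and follows essentially the same route as the paper: Lemma \ref{hd} gives $\mathrm{hd}(F_i)=\mathrm{hd}(G_i)=i$, Proposition \ref{bsrk} then bounds the ranks, and additivity of rank in the short exact sequences (\ref{short-orig}) and (\ref{short-dual}) yields the stated inequalities. The paper's proof is only a sketch; your write-up supplies the index bookkeeping and the non-splitting check that it leaves implicit, and both are carried out correctly.
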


\begin{proof} We have by Lemma \ref{hd} that hd$(G_i) = $hd$(F_i) = i$, $1 \leq i \leq n-1$. With Proposition \ref{bsrk}

$$ \mbox{rk}(E) \geq n+1 - \mbox{hd}(E) $$ and using the short exact sequences of $F_i$  for $1 \leq i \leq \frac{n+1}{2}$ and the sequences of $G_i$ for $\frac{n+1}{2} \leq i \leq n-1$, we prove the inequalities.

\end{proof}

We are now ready to state the second theorem on the simplicity of the syzygy bundles.

\begin{Theorem}\label{thm-f1} Consider the pure resolution (\ref{seq-dual}) and the syzygies given by the short exact sequences. Then if $F_1$ or $F_{n-1}$ are simple, all the syzygies are simple.
\end{Theorem}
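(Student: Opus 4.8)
The plan is to prove something slightly stronger than the statement: I will show that \emph{all} the endomorphism spaces $\End(F_i)$, for $1\le i\le n-1$, are isomorphic. Once this is known, simplicity of a single $F_i$ (in particular of $F_1$ or of $F_{n-1}$) forces $\dim\End(F_i)=1$ for every $i$, so that all the syzygies are simple. Concretely, I would establish, for each $2\le i\le n-1$, the two isomorphisms
$$
\End(F_{i-1})\;\cong\;\Ext^1(F_i,F_{i-1})\;\cong\;\End(F_i),
$$
and then chain them to obtain $\End(F_1)\cong\cdots\cong\End(F_{n-1})$.

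For the right-hand isomorphism I would apply $\Hom(F_i,-)$ to the defining sequence $0\to F_{i-1}\to \OO_{\PP^n}^{\beta_i}(d_i-d_{n+1})\to F_i\to 0$ of (\ref{short-dual}). The two middle contributions $\Hom(F_i,\OO_{\PP^n}^{\beta_i}(d_i-d_{n+1}))$ and $\Ext^1(F_i,\OO_{\PP^n}^{\beta_i}(d_i-d_{n+1}))$ equal $\beta_i\,h^0(F_i^\lor(d_i-d_{n+1}))$ and $\beta_i\,h^1(F_i^\lor(d_i-d_{n+1}))$; the first vanishes by the chain of isomorphisms (\ref{isom}), and the second then vanishes because $h^0(F_i^\lor(d_i-d_{n+1}))=h^1(F_i^\lor(d_i-d_{n+1}))$, the equality already exploited in Lemma \ref{lem-coeff}. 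The long exact sequence therefore collapses to $\Hom(F_i,F_{i-1})=0$ together with $\End(F_i)\cong\Ext^1(F_i,F_{i-1})$.

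For the left-hand isomorphism I would apply $\Hom(-,F_{i-1})$ to the same sequence. Using $\Hom(F_i,F_{i-1})=0$ from the previous step, the long exact sequence degenerates to $\End(F_{i-1})\cong\Ext^1(F_i,F_{i-1})$ as soon as $\Hom(\OO_{\PP^n}^{\beta_i}(d_i-d_{n+1}),F_{i-1})$ and $\Ext^1(\OO_{\PP^n}^{\beta_i}(d_i-d_{n+1}),F_{i-1})$ vanish, that is, as soon as $h^0(F_{i-1}(d_{n+1}-d_i))=h^1(F_{i-1}(d_{n+1}-d_i))=0$. This is the crux. I would prove it by peeling off the sequences (\ref{short-dual}) defining $F_{i-1},F_{i-2},\dots$ one at a time, each time twisted by $d_{n+1}-d_i$, so that every line-bundle term acquires a strictly negative twist; since $H^q(\OO_{\PP^n}(k))=0$ for $1\le q\le n-1$ and all $k$, each step yields an isomorphism raising the cohomological degree by one, until one reaches $\beta_0\,H^{i-1}(\OO_{\PP^n}(-d_i))$ and $\beta_0\,H^{i}(\OO_{\PP^n}(-d_i))$ respectively, both of which vanish because $1\le i-1$ and $i\le n-1$ place the indices strictly inside the middle range.

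Combining the two isomorphisms gives $\End(F_{i-1})\cong\End(F_i)$ for every $2\le i\le n-1$, whence all the $\End(F_i)$ share a common dimension and the theorem follows. The main obstacle is precisely the vanishing $h^0(F_{i-1}(d_{n+1}-d_i))=h^1(F_{i-1}(d_{n+1}-d_i))=0$ of the \emph{positively} twisted syzygy: unlike the vanishings inherited directly from (\ref{isom}), it is not immediate and must be extracted by the inductive unwinding above, with attention to the boundary indices (and to small $n$, where the range $2\le i\le n-1$ is empty and the statement is vacuous).
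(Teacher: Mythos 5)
Your proof is correct, and it takes a genuinely different route from the paper's. The paper argues by induction: assuming $F_{i-1}$ simple, it checks that the pair $(F_{i-1},\OO_{\PP^n}(d_i-d_{n+1}))$ satisfies all the conditions of Definition \ref{def-cok} (using Lemma \ref{lem-coeff} and Lemma \ref{betti} to get $\dim\Hom(F_{i-1},\OO_{\PP^n}(d_i-d_{n+1}))=\beta_i\geq 3$), invokes Brambilla's Theorem 4.3 to conclude that a \emph{generic} cokernel $\overline{F}_i$ of $F_{i-1}\to\OO_{\PP^n}^{\beta_i}(d_i-d_{n+1})$ is simple because $q(1,\beta_i)=1$, and then identifies $\overline{F}_i$ with $F_i$; the case starting from $F_{n-1}$ is handled separately by dualizing the resolution. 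Your argument replaces all of this with the two-sided homological computation $\End(F_{i-1})\cong\Ext^1(F_i,F_{i-1})\cong\End(F_i)$, whose ingredients (the vanishing of $H^0$ and $H^1$ of $F_i^\lor(d_i-d_{n+1})$ and of $F_{i-1}(d_{n+1}-d_i)$) all follow from the same dimension-shifting chains along (\ref{short-dual}) that the paper uses elsewhere; I checked the boundary indices and they work out for $2\le i\le n-1$. What your approach buys: it proves the stronger statement that all $\End(F_i)$ have the same dimension, so both directions of the theorem (starting from $F_1$ or from $F_{n-1}$) come for free without the dualization step; it dispenses with the cokernel-bundle machinery and with Lemma \ref{betti}; and it sidesteps a delicate point in the paper's proof, namely that Brambilla's theorem applies to a \emph{generic} map $\alpha_i$ while the map actually occurring in the resolution need not be generic, so the paper's identification $\overline{F}_i\simeq F_i$ (justified only by the two complexes having the same shape) is the weakest link of that argument. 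The one cosmetic imprecision in your write-up is the claim that the chains for $h^0(F_{i-1}(d_{n+1}-d_i))$ and $h^1(F_{i-1}(d_{n+1}-d_i))$ terminate \emph{at} $H^{i-1}(\OO_{\PP^n}(-d_i))^{\beta_0}$ and $H^{i}(\OO_{\PP^n}(-d_i))^{\beta_0}$: the last step actually passes through the sequence $0\to\OO_{\PP^n}^{\beta_0}(-d_i)\to\OO_{\PP^n}^{\beta_1}(d_1-d_i)\to F_1(d_{n+1}-d_i)\to 0$, so you also need $H^{i-1}$ and $H^{i}$ of $\OO_{\PP^n}^{\beta_1}(d_1-d_i)$ to vanish, which they do since $1\le i-1$ and $i\le n-1$; this is a detail to spell out, not a gap.
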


\begin{proof}
Suppose $F_1$ is simple. Let us prove by induction hypothesis that $F_i$ is simple for $i=1, \ldots, n-1$. Consider an injective generic map
$$
\alpha_i :  F_{i-1} \rightarrow \OO_{\PP^n}^{\beta_i}(d_i -d_{n+1}).
$$
We have the following properties:
\begin{itemize}

\item[$(i)$] $F_{i-1}, \OO_{\PP^n}(d_i-d_{n+1})$ are simple, the first bundle by induction hypothesis;

\item[$(ii)$] $\Hom(\OO_{\PP^n}(d_i -d_{n+1}),F_{i-1},) = 0.$ It follows from
$$
H^{0}(F_{i-1}(d_{n+1}-d_i)) \simeq H^j(F_{i-j-1}(d_{n+1}-d_i)), \; 0 \leq j \leq i-2
$$
and $H^{i-2}(F_1(d_{n+1}-d_i)) = H^{i-1}(\OO_{\PP^n}(-d_i)) = 0$;

\item[$(iii)$] $\Ext^1(\OO_{\PP^n}(d_i -d_{n+1}),F_{i-1})=0.$ In fact
$$
H^{1}(F_{i-1}(d_{n+1}-d_i)) \simeq H^j(F_{i-j}(d_{n+1}-d_i)), \; 0 \leq j \leq i-1
$$
and $H^{i-1}(F_1(d_{n+1}-d_i)) = H^{i}(\OO_{\PP^n}(-d_i)) = 0$;

\item[$(iv)$] $F^\lor_{i-1} \otimes \OO_{\PP^n}(d_i -d_{n+1})$ is globally generated. This is clear from the short exact sequences.

\item[$(iv)$] $\dim\Hom(F_{i-1}, \OO_{\PP^n}(d_i-d_{n+1})) \geq 3$. Follows from Lemmas \ref{lem-coeff} and \ref{betti}.
\end{itemize}
Hence we have that $F_{i-1}$ and $\OO_{\PP^n}(d_i-d_{n+1})$ satisfies the conditions of cokernel bundles (see Definition \ref{def-cok}), therefore $\overline{F}_i = {\rm coker} \alpha_i$ is a cokernel bundle and since $q(1,\beta_i) = 1 + {\beta_i}^2 - h^0(F^\lor_{i-1}(d_i-d_{n+1})) \beta_i = 1$ by Lemma \ref{lem-coeff}, we have that $\overline{F_i}$ is simple, see [\cite{bramb},Theorem 4.3].\\
Notice that we have two complexes of the form
$$
\xymatrix{0 \ar[r] & \OO_{\PP^n}^{\beta_0}(-d_{n+1}) \ar[r] & \OO_{\PP^n}^{\beta_1}(d_1-d_{n+1}) \ar[r] & \cdots \ar[r] & \OO_{\PP^n}^{\beta_i}(d_i-d_{n+1}) \ar[r] & F_i \ar[r] & 0  }
$$
and
$$
\xymatrix{0 \ar[r] & \OO_{\PP^n}^{\beta_0}(-d_{n+1}) \ar[r] & \OO_{\PP^n}^{\beta_1}(d_1-d_{n+1}) \ar[r] & \cdots \ar[r] & \OO_{\PP^n}^{\beta_i}(d_i-d_{n+1}) \ar[r] & \overline{F_i} \ar[r] & 0 }$$
therefore $\overline{F_i} \simeq F_i$ and $F_i$ is simple.\\
For the case that $F_{n-1}$ is simple, we can define new coefficients $\tilde{d}_i$, in the same way as in the last part of the proof of Theorem \ref{thm-b0}, and take the dual of (\ref{seq-dual}), tensor it by $\OO_{\PP_n}(-\tilde{d}_{n+1})$ and apply the same technique.

\end{proof}

We would like to find conditions to grant simplicity for every syzygy bundle in the resolution, therefore in the next results, we ask for conditions which give us either $F_1$ or $F_{n-1}$ simple bundles.

\begin{Corollary} Consider the complex (\ref{seq-orig}). If $\beta_n-\beta_{n+1} = n$ or $\beta_1- \beta_0 = n$ then all the syzygies are simple.

\end{Corollary}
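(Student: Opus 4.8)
The plan is to reduce everything to Theorem \ref{thm-f1}: by that result it is enough to produce a single simple syzygy at one of the two ends of the resolution. Concretely, I would show that the hypothesis $\beta_1-\beta_0=n$ forces $F_1$ to be simple, and that $\beta_n-\beta_{n+1}=n$ forces $F_{n-1}$ to be simple. The two implications are interchanged by the duality used at the end of the proofs of Theorems \ref{thm-b0} and \ref{thm-f1}: passing to the dual resolution and twisting by $\OO_{\PP^n}(-\tilde d_{n+1})$ with $\tilde d_i=d_{n+1}-d_{n+1-i}$ sends the new Betti numbers to $\beta_{n+1},\beta_n,\dots,\beta_0$ and the new $F_1$ to the old $F_{n-1}$, so that the condition $\beta_n-\beta_{n+1}=n$ becomes the condition $\beta_1-\beta_0=n$ for the dual resolution. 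Hence it suffices to treat the case $\beta_1-\beta_0=n$ and conclude that $F_1$ is simple.

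For this I would read off from the first short exact sequence of (\ref{short-dual})
$$
0\longrightarrow \OO_{\PP^n}^{\beta_0}(-d_{n+1})\longrightarrow \OO_{\PP^n}^{\beta_1}(d_1-d_{n+1})\longrightarrow F_1\longrightarrow 0
$$
the structure of a cokernel bundle with $E_0=\OO_{\PP^n}(-d_{n+1})$, $E_1=\OO_{\PP^n}(d_1-d_{n+1})$ and $(a,b)=(\beta_0,\beta_1)$, and verify the hypotheses of Definition \ref{def-cok} just as in the verification carried out in the proof of Theorem \ref{thm-f1}. Both $E_0,E_1$ are line bundles, hence simple; $\Hom(E_1,E_0)=H^0(\OO_{\PP^n}(-d_1))=0$ and $\Ext^1(E_1,E_0)=H^1(\OO_{\PP^n}(-d_1))=0$ because $d_1>0$ and $n\geq 2$; the bundle $E_0^\lor\otimes E_1=\OO_{\PP^n}(d_1)$ is globally generated; and $w=\dim\Hom(E_0,E_1)=h^0(\OO_{\PP^n}(d_1))=\binom{n+d_1}{n}\geq 3$. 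The only remaining numerical condition $b\,\rk E_1-a\,\rk E_0\geq n$ reads $\beta_1-\beta_0\geq n$, and the hypothesis turns it into an equality; geometrically this says exactly that $F_1$ has the minimal possible rank $n$, which is consistent with Lemma \ref{hd} and Proposition \ref{bsrk}.

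With $F_1$ exhibited as a cokernel bundle I would then conclude simplicity by invoking \cite[Theorem 4.3]{bramb}, exactly as was done for $\overline{F_i}$ in the proof of Theorem \ref{thm-f1}, computing the associated quadratic form $q(\beta_0,\beta_1)=\beta_0^2+\beta_1^2-w\,\beta_0\beta_1$ and checking that the extremal relation $\beta_1-\beta_0=n$ places $(\beta_0,\beta_1)$ in the range to which Brambilla's criterion applies. A short computation rewrites $q(\beta_0,\beta_1)=n^2-(w-2)\beta_0(\beta_0+n)$, and since $w\geq n+1$ and $\beta_0\geq 1$ one gets $(w-2)\beta_0(\beta_0+n)\geq (n-1)(n+1)=n^2-1$, that is $q(\beta_0,\beta_1)\leq 1$, with equality precisely in the linear, cyclic extremal case $\beta_0=1$, $d_1=1$. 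Once $F_1$ is simple, Theorem \ref{thm-f1} propagates simplicity to every $F_i$, and the dual argument handles the case $\beta_n-\beta_{n+1}=n$. The hard part, and the step I would scrutinize most carefully, is the last one: one must match the extremal case $\beta_1-\beta_0=n$ to the exact hypothesis of \cite[Theorem 4.3]{bramb} and, as in the identification $\overline{F_i}\cong F_i$ of Theorem \ref{thm-f1}, guarantee that the specific syzygy $F_1$ produced by the pure resolution is the simple (generic) cokernel bundle and not a special member of the family. This is where the minimality $\rk F_1=n$ has to be used in an essential way, since for larger rank the same presentation would no longer single out the simple representative.
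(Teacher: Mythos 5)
Your reduction is the right skeleton --- produce one simple end syzygy and let Theorem \ref{thm-f1} propagate simplicity, with the duality twist by $\tilde d_i=d_{n+1}-d_{n+1-i}$ exchanging the two hypotheses --- and your numerics are correct: with $\beta_1=\beta_0+n$ and $w=\binom{n+d_1}{n}\geq n+1$ one indeed gets $q(\beta_0,\beta_1)=n^2-(w-2)\beta_0(\beta_0+n)\leq 1$. But the step you yourself flag as the one to ``scrutinize most carefully'' is a genuine gap, not a loose end. Brambilla's Theorem 4.3 is a statement about the \emph{generic} cokernel bundle of type $(a,b)$: it asserts simplicity for a general member of the family of cokernels of maps in $\Hom(E_0^a,E_1^b)$ when $q(a,b)\leq1$. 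The map $\OO_{\PP^n}^{\beta_0}(-d_{n+1})\to\OO_{\PP^n}^{\beta_1}(d_1-d_{n+1})$ handed to you by the pure resolution is one specific member of that family, and nothing in your argument excludes that it is a special member whose cokernel fails to be simple. You correctly sense that the minimality $\rk F_1=n$ must be used ``in an essential way'', but you never actually use it: the inequality $\beta_1-\beta_0\geq n$ enters only as the admissibility condition for cokernel bundles, which says nothing about non-generic maps.

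The paper closes exactly this gap by a different and much shorter route: since $\beta_1-\beta_0=n$ forces $\rk F_1=n$, the Bohnhorst--Spindler stability criterion \cite[Theorem 2.7]{BS} for rank-$n$ bundles on $\PP^n$ with a length-one resolution by line bundles applies to the \emph{actual} bundle $F_1$, with no genericity assumption; here every summand of the middle term has degree $b_j=d_1-d_{n+1}$ while $\mu(F_1)=(d_1-d_{n+1})+\beta_0d_1/n>b_j$, so $F_1$ is stable, hence simple, and dually $F_{n-1}$ is stable when $\beta_n-\beta_{n+1}=n$. Then Theorem \ref{thm-f1} finishes, as in your plan. If you want to keep your cokernel-bundle argument you must prove that \emph{every} fibrewise-injective map of the given type has simple cokernel when the rank equals $n$ --- which is, in substance, what the Bohnhorst--Spindler theorem supplies and what Brambilla's genericity statement does not. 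I recommend replacing the appeal to \cite[Theorem 4.3]{bramb} with the appeal to \cite[Theorem 2.7]{BS}; your $q\leq1$ computation then becomes superfluous.
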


\begin{proof} Under this hypothesis, it follows from  \cite[Theorem 2.7]{BS} that either $F_{n-1}$ is stable or $F_1$ is stable, then all the syzygies are simple.

\end{proof}

\begin{Corollary} Consider the complex (\ref{seq-orig}). If the injective map

$$\alpha_{n+1}:  \OO_{\PP^n}^{\beta_{n+1}}(-d_{n+1}) \longrightarrow \OO_{\PP^n}^{\beta_{n}}(-d_{n})$$ is generic and ${\beta_{n+1}}^2 + \beta_{n}^2 -h^0(\OO_{\PP^n}(d_{n+1}-d_{n})) \beta_{n+1} \beta_{n} \leq 1$, or the injective map
 $$\alpha_1^\lor: \OO_{\PP^n}^{\beta_0} \longrightarrow \OO_{\PP^n}^{\beta_1}(d_1)$$ is generic and ${\beta_0}^2 + \beta_{1}^2 - h^0(\OO_{\PP^n}(d_1)) \beta_0 \beta_{1} \leq 1$, then all the syzygies are simple.

\end{Corollary}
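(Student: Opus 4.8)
The plan is to reduce everything to Theorem \ref{thm-f1}: it suffices to show that under the first hypothesis $F_{n-1}$ is simple and under the second hypothesis $F_1$ is simple, since in either case Theorem \ref{thm-f1} then forces simplicity of all the syzygies. The guiding observation is that both numerical conditions in the statement are precisely Brambilla's quadratic form $q(a,b)=a^2+b^2-w\,ab$, with $w=\dim\Hom(E_0,E_1)$, evaluated at the extremal step of the resolution. So the strategy is to recognize the cokernel of the extremal map as a cokernel bundle in the sense of Definition \ref{def-cok} and then invoke the simplicity criterion [\cite{bramb}, Theorem 4.3].

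For the second hypothesis this is direct. Twisting the first short exact sequence of (\ref{short-dual}) by $\OO_{\PP^n}(d_{n+1})$ gives
$$0 \longrightarrow \OO_{\PP^n}^{\beta_0} \longrightarrow \OO_{\PP^n}^{\beta_1}(d_1) \longrightarrow F_1(d_{n+1}) \longrightarrow 0,$$
so $F_1(d_{n+1})={\rm coker}\,\alpha_1^\lor$ is the cokernel of the map in the statement, with $E_0=\OO_{\PP^n}$, $E_1=\OO_{\PP^n}(d_1)$, $a=\beta_0$, $b=\beta_1$ and $w=h^0(\OO_{\PP^n}(d_1))$. For the first hypothesis I would pass through the original resolution: dualizing the first short exact sequence of (\ref{short-orig}) (legitimate since $G_1$ is locally free) and twisting by $\OO_{\PP^n}(-d_{n+1})$ identifies $F_{n-1}\cong G_1^\lor(-d_{n+1})$, where $G_1={\rm coker}\,\alpha_{n+1}$ is the cokernel of the map in the statement, with $E_0=\OO_{\PP^n}(-d_{n+1})$, $E_1=\OO_{\PP^n}(-d_n)$, $a=\beta_{n+1}$, $b=\beta_n$ and $w=h^0(\OO_{\PP^n}(d_{n+1}-d_n))$. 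Since simplicity is invariant under dualizing and under twisting by a line bundle, it is enough to prove that these cokernels are simple.

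It then remains to verify that in each case $(E_0,E_1)$ satisfies the conditions of Definition \ref{def-cok}. Simplicity of $E_0,E_1$ is immediate as they are line bundles; $\Hom(E_1,E_0)=0$ because the relevant twist is by a negative integer ($d_n<d_{n+1}$, resp. $0<d_1$); $\Ext^1(E_1,E_0)=0$ because $H^1(\OO_{\PP^n}(k))=0$ for every $k$ when $n\ge 2$; and $E_0^\lor\otimes E_1$ is a positive twist of $\OO_{\PP^n}$, hence globally generated. The rank inequality $b-a\ge n$ is exactly $\beta_n-\beta_{n+1}\ge n$ (resp. $\beta_1-\beta_0\ge n$), supplied by Lemma \ref{betti}. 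The only condition requiring an argument is $w\ge 3$, which I would deduce from the hypothesis itself: if $w\le 2$ then $(a-b)^2=a^2+b^2-2ab\le a^2+b^2-w\,ab=q(a,b)\le 1$, contradicting $|a-b|\ge n\ge 2$; hence $w\ge 3$.

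With all conditions of Definition \ref{def-cok} verified, the generic cokernel bundle is simple by [\cite{bramb}, Theorem 4.3] precisely because the stated quadratic quantity is at most $1$, and this quantity is our $q(a,b)$. Hence $F_1(d_{n+1})$ (resp. $G_1$) is simple, so $F_1$ (resp. $F_{n-1}$) is simple, and Theorem \ref{thm-f1} concludes. The point deserving most care is the identification $F_{n-1}\cong G_1^\lor(-d_{n+1})$, that is, matching the extremal cokernel of (\ref{seq-orig}) with the extremal kernel of (\ref{seq-dual}) after dualizing; once this bookkeeping is carried out, the remainder is a routine application of the cokernel-bundle machinery.
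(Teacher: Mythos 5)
Your proof is correct and follows essentially the same route as the paper: identify ${\rm coker}\,\alpha_1^\lor$ with $F_1(d_{n+1})$ and ${\rm coker}\,\alpha_{n+1}$ with $F_{n-1}^\lor(-d_{n+1})$, verify the cokernel-bundle conditions of Definition \ref{def-cok} so that Brambilla's Theorem 4.3 yields simplicity at the extremal step, and then invoke Theorem \ref{thm-f1}. The paper's own proof is a one-line version of exactly this argument; your version merely supplies the verifications (in particular the rank bound from Lemma \ref{betti} and the $w\geq 3$ check) that the paper leaves implicit.
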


\begin{proof} If we have the hypothesis above, ${\rm coker}\alpha_{n+1} = F^\lor_{n-1}(-d_{n+1})$ or ${\rm coker}\alpha^\lor_1 = F_1$ are simple cokernel bundles, see [\cite{bramb},Theorem 4.3],  and the previous theorem applies.

\end{proof}

We conclude this part with the following observation.

\begin{Remark}\label{rmk}
Consider the syzygy modules $N_i$, for $i$ from 1 to $p-2$, which are obtained by the pure resolution
$$
\xymatrix{0 \ar[r] & R^{\beta_p}(-d_p) \ar[r] & \cdots \ar[r] & R^{\beta_1}(-d_1) \ar[r] & R^{\beta_0}(-d_0) \ar[r] & \overline{M} \ar[r] & 0\\
& & N_1 \ar[ur]\\
& 0 \ar[ur]
} $$
Recalling the equivalence of category between modules and their sheafifications, we get that, if the vector bundles $F_i$ are simple, the modules $N_i$ are indecomposable.
\end{Remark}

\section{Exceptionality of syzygy bundles}

In this section we state and prove sufficient and necessary conditions to ensure the exceptionality of the syzygy bundles $F_i$. We obtain the following result.

\begin{Theorem}\label{thm-excep}
Consider the syzygy bundles $F_i$ as defined in (\ref{short-dual}), for $i$ from 1 to $n-1$. Suppose also that $F_i$ are simple for each $i$; then every $F_i$ is exceptional if and only if each one of the following conditions hold
\begin{description}
\item[i)] $\beta_0^2 + \beta_1^2 - \binom{d_1+n}{n}\beta_0\beta_1 = 1$;\\
\item[ii)] $d_1 \leq n;$
\item[iii)]
$$
\left\{
\begin{array}{ll}
H^{n-i+1}(F_{i-1}(d_{n+1}-d_i)) = H^i(F_i^\lor(d_i - d_{n+1})) = 0 & \mbox{if}\:\;n\:\;\mbox{is even;}\vspace{3mm}\\
H^{n-i+1}(F_{i-1}(d_{n+1}-d_i)) = H^i(F_i^\lor(d_i - d_{n+1})) = 0 & \mbox{if}\:\;n\:\;\mbox{is odd and} \;\: i \neq \frac{n+1}{2};\vspace{3mm}\\
H^{n-i+1}(F_{i-1}(d_{n+1}-d_i)) \stackrel{H^i(\varphi)}{\simeq} H^i(F_i^\lor(d_i - d_{n+1})) & \mbox{if}\:\;n\:\;\mbox{is odd and} \;\: i = \frac{n+1}{2}.
\end{array}
\right.
$$
where it will be proven that, if $n$ is odd and $i = \frac{n+1}{2}$, $H^{n-i+1}(F_{i-1}(d_{n+1}-d_i)) \simeq H^i(F_{i-1} \otimes F_i^\lor)$ and the morphism $H^i(\varphi): H^i(F_i^\lor \otimes F_{i-1}) \rightarrow H^i((F_i^\lor)^{\beta_i} (d_i - d_{n+1}))$ is the one obtained by the short exact sequence
$$
0 \longrightarrow F_i^\lor \otimes F_{i-1} \stackrel{\varphi}{\longrightarrow} (F_i^\lor)^{\beta_i} (d_i - d_{n+1}) \longrightarrow F_i^\lor \otimes F_i \longrightarrow 0
$$
considering the long exact induced in cohomology.
\end{description}
\end{Theorem}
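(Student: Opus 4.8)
The plan is to use that, since every $F_i$ is already assumed simple, $F_i$ is exceptional precisely when $\Ext^k(F_i,F_i) = H^k(F_i^\lor \otimes F_i) = 0$ for all $1 \le k \le n$; so the whole statement reduces to characterising the vanishing of these higher cohomology groups in terms of (i)--(iii). The main tool will be the short exact sequence displayed in the statement,
$$
0 \longrightarrow F_i^\lor \otimes F_{i-1} \stackrel{\varphi}{\longrightarrow} (F_i^\lor)^{\beta_i}(d_i - d_{n+1}) \longrightarrow F_i^\lor \otimes F_i \longrightarrow 0,
$$
obtained by tensoring the $i$-th sequence of (\ref{short-dual}) with $F_i^\lor$, together with the chain-of-isomorphism technique already exploited in Theorem \ref{thm-b0}. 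Passing to the induced long exact sequence in cohomology, each $H^k(F_i^\lor \otimes F_i)$ is squeezed between the two flanking groups $H^k(F_i^\lor \otimes F_{i-1})$ and $\beta_i\,H^k(F_i^\lor(d_i-d_{n+1}))$. I would first show, exactly as in Theorem \ref{thm-b0}, that resolving $F_i^\lor$ (respectively $F_{i-1}$) by sums of line bundles and running the isomorphism chains collapses these flanking groups onto the two distinguished groups $H^i(F_i^\lor(d_i-d_{n+1}))$ and $H^{n-i+1}(F_{i-1}(d_{n+1}-d_i))$ of (iii), all the remaining intermediate cohomology vanishing for free because the middle terms are sums of line bundles.

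I would treat the base case $i=1$ separately, as it is where (i) and (ii) enter. Twisting the first sequence of (\ref{short-dual}) by $\OO_{\PP^n}(d_{n+1})$ exhibits $F_1(d_{n+1})$ as the cokernel bundle of the pair $(E_0,E_1) = (\OO_{\PP^n}(-d_{n+1}),\OO_{\PP^n}(d_1-d_{n+1}))$, for which $W = \Hom(E_0,E_1) = H^0(\OO_{\PP^n}(d_1))$ has dimension $\binom{d_1+n}{n}$. Condition (ii), $d_1 \le n$, is exactly the vanishing $H^n(\OO_{\PP^n}(-d_1)) = \Ext^n(E_1,E_0) = 0$, which together with the automatic vanishings upgrades $(E_0,E_1)$ to a strongly exceptional pair and $F_1(d_{n+1})$ to a genuine Steiner bundle (Definition \ref{def-cok}). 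For such a simple Steiner bundle, Brambilla's criterion \cite{bramb} governs exceptionality by the quadratic form $\beta_0^2+\beta_1^2 - \dim W\,\beta_0\beta_1$, whose value is $1$ precisely under (i). I would confirm this by computing directly, from the dual of the first sequence of (\ref{short-dual}),
$$
\chi(F_1^\lor \otimes F_1) = \beta_0^2 + \beta_1^2 - \Big(\tbinom{d_1+n}{n} + \tbinom{n-d_1}{n}\Big)\beta_0\beta_1 ,
$$
where (ii) kills the term $\binom{n-d_1}{n} = \chi(\OO_{\PP^n}(-d_1))$ and (i) then forces $\chi(F_1^\lor\otimes F_1)=1$; simplicity plus the vanishings of (iii) then give $F_1$ exceptional, while conversely exceptionality forces $\chi=1$ and the strong exceptionality of the pair, that is (i) and (ii).

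For $i \ge 2$ I would feed the collapsed flanking groups into the long exact sequence of $\varphi$ and split according to whether the two distinguished degrees $i$ and $n-i+1$ coincide. When $i \neq n-i+1$ — always if $n$ is even, and if $n$ is odd whenever $i\neq\frac{n+1}{2}$ — the two groups occupy different spots of the long exact sequence, so the only obstructions to $H^k(F_i^\lor\otimes F_i)=0$ are the nonvanishing of $H^i(F_i^\lor(d_i-d_{n+1}))$ and of $H^{n-i+1}(F_{i-1}(d_{n+1}-d_i))$, and their vanishing is both necessary and sufficient; this is the first two lines of (iii). When $n$ is odd and $i = \frac{n+1}{2}$ one has $n-i+1 = i$, both distinguished groups land in the same place, the chains give the identification $H^{n-i+1}(F_{i-1}(d_{n+1}-d_i)) \simeq H^i(F_{i-1}\otimes F_i^\lor)$, and the relevant portion of the long exact sequence reads $H^i(F_{i-1}\otimes F_i^\lor) \stackrel{H^i(\varphi)}{\to} H^i((F_i^\lor)^{\beta_i}(d_i-d_{n+1})) \to H^i(F_i^\lor\otimes F_i) \to H^{i+1}(F_{i-1}\otimes F_i^\lor)$; the outer terms vanishing by the chains, so that $H^i(F_i^\lor\otimes F_i)\simeq {\rm coker}\,H^i(\varphi)$ and its vanishing amounts to $H^i(\varphi)$ being an isomorphism, which is the third line of (iii).

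I expect the main obstacle to be the bookkeeping of the chain-of-isomorphism step in the correct cohomological degrees: checking that, after resolving by line bundles, no intermediate group other than the two distinguished ones survives and that the surviving groups genuinely sit in degrees $i$ and $n-i+1$. This is compounded by the careful handling of the self-dual middle degree for odd $n$, where one must establish the identification $H^{n-i+1}(F_{i-1}(d_{n+1}-d_i)) \simeq H^i(F_{i-1}\otimes F_i^\lor)$ and analyse the cokernel and kernel of $H^i(\varphi)$ in place of a plain vanishing. Extracting both necessity and sufficiency of (iii) simultaneously from the long exact sequence, with exactly the right edge maps, is the delicate point.
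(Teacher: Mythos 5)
Your overall architecture matches the paper's: treat $i=1$ separately to extract (i) and (ii), then for $i\ge 2$ feed the sequence $0\to F_i^\lor\otimes F_{i-1}\to(F_i^\lor)^{\beta_i}(d_i-d_{n+1})\to F_i^\lor\otimes F_i\to 0$ into cohomology, locate the two distinguished groups in degrees $i$ and $n-i+1$, and split into cases according to whether these degrees interact. However, there is one genuine gap in the step you describe as collapsing the flanking group $H^k(F_i^\lor\otimes F_{i-1})$ onto the single group $H^{n-i+1}(F_{i-1}(d_{n+1}-d_i))$ ``for free because the middle terms are sums of line bundles.'' This is not free. If you resolve $F_{i-1}$ by line bundles and tensor with $F_i^\lor$, the chase produces contributions from $H^i(F_i^\lor(d_j-d_{n+1}))$ for every $j=0,\dots,i-1$, landing in cohomological degrees $1,\dots,i$, with connecting maps you do not control; nothing concentrates in a single degree. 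The paper's mechanism is different and essential: it runs an induction on $i$ assuming $F_{i-1}$ is \emph{exceptional}, and uses the companion sequence $0\to F_i^\lor\otimes F_{i-1}\to F_{i-1}^{\beta_i}(d_{n+1}-d_i)\to F_{i-1}^\lor\otimes F_{i-1}\to 0$ (the dual of the $i$-th sequence of (\ref{short-dual}) tensored with $F_{i-1}$). The vanishing of $H^{\ge1}(F_{i-1}^\lor\otimes F_{i-1})$, which is exactly the inductive hypothesis and not a consequence of any line-bundle resolution, is what yields $H^1(F_i^\lor\otimes F_{i-1})\simeq\KK$ and $H^k(F_i^\lor\otimes F_{i-1})\simeq H^k(F_{i-1}^{\beta_i}(d_{n+1}-d_i))$ for $k\ge2$, the latter concentrated in degree $n-i+1$ by Serre duality. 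Your plan never invokes this induction, so the squeeze argument has an uncontrolled flank.

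Two smaller points. First, your dichotomy ``$i\ne n-i+1$ versus $i=n-i+1$'' misses that for $n$ even and $i=n/2$ the two distinguished groups do meet in the long exact sequence, as the outer terms of an extension $0\to H^i((F_i^\lor)^{\beta_i}(d_i-d_{n+1}))\to H^i(F_i^\lor\otimes F_i)\to H^{i+1}(F_{i-1}^{\beta_i}(d_{n+1}-d_i))\to 0$; the conclusion (both must vanish) is unchanged, but the paper rightly isolates this as a separate case. Second, for $i=1$ the appeal to Brambilla's criterion is delicate because that result concerns \emph{generic} cokernel bundles, while $F_1$ is a specific one; your backup route via $\chi(F_1^\lor\otimes F_1)$ does work, but only after you have established (as the paper does directly from the two twists of the first sequence of (\ref{short-dual})) that the only possibly nonzero groups are $H^0$, $H^1$ and $H^{n-1}$, with $H^{n-1}(F_1^\lor\otimes F_1)\simeq H^n(\OO_{\PP^n}^{\beta_1}(-d_1))$ giving condition (ii) independently of the Euler characteristic.
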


\begin{proof}
We will first look for conditions which are equivalent to the excepcionality of the bundle $F_1$; therefore we must compute the cohomology of the bundle $F_1^\lor \otimes F_1$. Consider the short exact sequence
\begin{equation}\label{exc-uno}
0 \longrightarrow (F_1^\lor)^{\beta_0}(-d_{n+1}) \longrightarrow (F_1^\lor)^{\beta_1}(d_1-d_{n+1}) \longrightarrow F_1^\lor \otimes F_1 \longrightarrow 0.
\end{equation}
The first step consists now in calculating the cohomology of $F_1^\lor(-d_{n+1})$. We now consider the sequence
$$
0 \longrightarrow F_1^\lor(-d_{n+1}) \longrightarrow \OO_{\PP^n}^{\beta_1}(-d_1) \longrightarrow \OO_{\PP^n}^{\beta_0} \longrightarrow 0
$$
obtaining that
$$
\begin{array}{ll}
H^k(F_1^\lor(-d_{n+1}) = 0 & \mbox{for}\:\: i=0,2,3,\ldots,n-2,n-1\vspace{3mm}\\
H^1(F_1^\lor(-d_{n+1}) \simeq H^0(\OO_{\PP^n}^{\beta_0}) \simeq \KK^{\beta_0} \vspace{3mm}\\
H^n(F_1^\lor(-d_{n+1}) \simeq H^n(\OO_{\PP^n}^{\beta_1}(-d_1))
\end{array}
$$
We must now compute the cohomology of the second bundle appearing in (\ref{exc-uno}) and we will do it using the short exact sequence
$$
0 \longrightarrow F_1^\lor(d_1-d_{n+1}) \longrightarrow \OO_{\PP^n}^{\beta_1} \longrightarrow \OO_{\PP^n}^{\beta_0}(d_1) \longrightarrow 0
$$
from which we obtain that
$$
H^0(F_1^\lor(d_1-d_{n+1})) \simeq \cdots \simeq H^{n-2}(F_{n-1}^\lor(d_1-d_{n+1})) = 0,
$$
that we have already computed in the proof of Theorem \ref{thm-b0}, and moreover
$$
h^1(F_1^\lor(d_1-d_{n+1})) = dim H^1(F_1^\lor(d_1-d_{n+1})) =\beta_0 \binom{d_1+n}{n} - \beta_1,
$$
$$
H^2(F_1^\lor(d_1-d_{n+1})) \simeq \cdots \simeq H^n(F_1^\lor(d_1-d_{n+1}))  = 0.
$$
From the cohomology we have already calculated, we get that
$$
H^2(F_1^\lor \otimes F_1) \simeq \cdots \simeq H^{n-2}(F_1^\lor\otimes F_1) \simeq H^n(F_1^\lor \otimes F_1) = 0.
$$
Recall that we supposed $F_1$ to be simple, hence we have the following exact sequence in cohomology
$$
0 \longrightarrow \KK \longrightarrow H^1((F_1^\lor)^{\beta_0}(-d_{n+1})) \longrightarrow H^1((F_1^\lor)^{\beta_1}(d_1-d_{n+1})) \longrightarrow H^1(F_1^\lor \otimes F_1) \longrightarrow 0.
$$

Therefore, $H^1(F_1^\lor \otimes F_1)$ vanishes if and only if
$$
\beta_0^2 + \beta_1^2 - \binom{d_1+n}{n}\beta_0\beta_1 = 1.
$$
The other cohomology which we need to vanish is given by
$$
H^{n-1}(F_1^\lor \otimes F_1) \simeq H^n(F_1^\lor(-d_{n+1}) \simeq H^n(\OO_{\PP^n}^{\beta_1}(-d_1)),
$$
that is equal to zero if and only if $d_1\leq n$.\\
Let us suppose the bundle $F_{i-1}$ to be exceptional and let us find conditions ensuring the exceptionality of $F_i$. In order to do so, consider an $i$ fixed from $2$ to $n-1$ and consider also the following short exact sequences
\begin{equation}\label{excep-induction1}
0 \longrightarrow F_i^\lor \otimes F_{i-1} \longrightarrow (F_i^\lor)^{\beta_i}(d_i-d_{n+1}) \longrightarrow F_i^\lor \otimes F_i \longrightarrow 0
\end{equation}
\begin{equation}\label{excep-induction2}
0 \longrightarrow F_i^\lor \otimes F_{i-1} \longrightarrow F_{i-1}^{\beta_i}(d_{n+1}-d_i) \longrightarrow F_{i-1}^\lor \otimes F_{i-1} \longrightarrow 0
\end{equation}
Recall that by induction hypothesis we have that
$$
H^0(F_{i-1}^\lor \otimes F_{i-1}) \simeq \KK \:\:\mbox{and}\:\: H^k(F_{i-1}^\lor \otimes F_{i-1}) = 0 \:\:\mbox{for each}\:\: k >0.
$$
Let us compute the cohomology of the bundle $F_i^\lor(d_i-d_{n+1})$; using the usual short exact sequences defined in (\ref{short-dual}), we have that:
\begin{itemize}
\item if $k<i$ then $H^k(F_i^\lor(d_i-d_{n+1})) \simeq H^{k+n-i-1}(F_{n-1}^\lor(d_i-d_{n+1})) = 0$, because $k+n-1-i \leq n-2$;
\item similarly, if $k>i$ then $H^k(F_i^\lor(d_i-d_{n+1})) \simeq H^{k+1-i}(F_1^\lor(d_i-d_{n+1})) = 0$, because $k+1-i \geq 2$;
\end{itemize}
hence we notice that the only possible non-vanishing cohomology of the bundle is given exactly by $H^i(F_i^\lor(d_i-d_{n+1})).$\\
Using Serre duality and a similar argument, we obtain that
$$
H^k(F_{i-1}(d_{n+1}-d_i)) = 0, \:\:\mbox{if}\:\: k \neq n-i+1
$$
the only possible non-vanishing cohomology of the bundle is given exactly by $H^{n-i+1}(F_{i-1}(d_{n+1}-d_i)).$
Considering this and the induction hypothesis, we get from the exact sequence (\ref{excep-induction2}), that
$$
H^0(F_{i-1}^\lor \otimes F_{i-1}) \simeq H^1(F_i^\lor \otimes F_{i-1}) \simeq \KK
$$
and
$$
H^k(F_i^\lor \otimes F_{i-1}) \simeq H^k(F_{i-1}(d_{n+1}-d_i)) \:\:\forall\:i=2,\ldots,n.
$$
Notice that the following cohomology groups are isomorphic
$$
H^i(F_i^\lor(d_i - d_{n+1})) \simeq H^1(F_1^\lor(d_i-d_{n+1})) \simeq H^{n-1}(F_{n-1}^\lor(d_i-d_{n+1})).
$$
Let us first look for an explicit expression of the group $H^1(F_1^\lor(d_i-d_{n+1}))$ and in order to do so, take the sequence
$$
0 \longrightarrow F_1^\lor(d_i-d_{n+1}) \longrightarrow \OO_{\PP^n}^{\beta_1}(d_i-d_1) \longrightarrow \OO^{\beta_0}_{\PP^n}(d_i) \longrightarrow 0
$$
and we are interested in the following part of the induced sequence in cohomology
$$
0 \longrightarrow H^0(F_1^\lor(d_i-d_{n+1})) \longrightarrow H^0(\OO_{\PP^n}^{\beta_1}(d_i-d_1)) \longrightarrow H^0(\OO^{\beta_0}_{\PP^n}(d_i)) \longrightarrow H^1(F_1^\lor(d_i-d_{n+1})) \longrightarrow 0
$$
The problem now moves to the computation of the dimension of the vector space $H^0(F_1^\lor(d_i-d_{n+1}))$.\\
Consider the exact sequences
$$
\begin{array}{c}
0 \longrightarrow F_2^\lor(d_i - d_{n+1}) \longrightarrow \OO_{\PP^n}^{\beta_2}(d_i - d_2) \longrightarrow F_1^\lor(d_i-d_{n+1}) \longrightarrow 0\\
\vdots\\
0 \longrightarrow F_{i-1}^\lor(d_i - d_{n+1}) \longrightarrow \OO_{\PP^n}^{\beta_{i-1}}(d_i - d_{i-1}) \longrightarrow F_{i-2}^\lor(d_i-d_{n+1}) \longrightarrow 0\vspace{2mm}\\
0 \longrightarrow F_i^\lor(d_i - d_{n+1}) \longrightarrow \OO_{\PP^n}^{\beta_i} \longrightarrow F_{i-1}^\lor(d_i-d_{n+1}) \longrightarrow 0 \vspace{2mm}\\
0 \longrightarrow F_{i+1}^\lor(d_i - d_{n+1}) \longrightarrow \OO_{\PP^n}^{\beta_{i+1}}(\underbrace{d_i - d_{i+1}}_{<0}) \longrightarrow F_i^\lor(d_i-d_{n+1}) \longrightarrow 0
\end{array}
$$
We have already proven that  $H^0(F_i^\lor(d_i-d_{n+1})) = H^1(F_i^\lor(d_i-d_{n+1}))= 0$ and also $H^1(F_j^\lor(d_i-d_{n+1}))=0$ for each $j = 2, \ldots, i+1$, hence we obtain that
$$
h^i(F_i^\lor(d_i-d_{n+1})) = h^1(F_1^\lor(d_i-d_n+1)) = \sum_{k=0}^i (-1)^k \beta_k \binom{d_i-d_k+n}{n}.
$$
Let us now "go to the other side", arriving to $H^{n-1}(F_{n-1}^\lor(d_i-d_{n+1}))$.\\
Consider the short exact sequence
$$
0 \longrightarrow \OO_{\PP^n}^{\beta_{n+1}}(d_i-d_{n+1}) \longrightarrow \OO_{\PP^n}^{\beta_n}(d_i-d_n) \longrightarrow F_{n-1}^\lor(d_i-d_{n+1}) \longrightarrow 0
$$
from which we induce the following part induced in cohomology
$$
0 \longrightarrow H^{n-1}(F_{n-1}^\lor(d_i-d_{n+1})) \longrightarrow H^n(\OO_{\PP^n}^{\beta_{n+1}}(d_i-d_{n+1})) \longrightarrow H^n(\OO_{\PP^n}^{\beta_n}(d_i-d_n)) \longrightarrow H^n(F_{n-1}^\lor(d_i-d_{n+1})) \longrightarrow 0.
$$
As before, take
$$
\begin{array}{c}
0 \longrightarrow F_{n-1}^\lor(d_i-d_{n+1}) \longrightarrow \OO_{\PP^n}^{\beta_{n-1}}(d_i-d_{n+1}) \longrightarrow F_{n-2}^\lor(d_i-d_{n+1}) \longrightarrow 0\\
\vdots\\
0 \longrightarrow F_{i+2}^\lor(d_i-d_{n+1}) \longrightarrow \OO_{\PP^n}^{\beta_{i+2}}(d_i-d_{i+2}) \longrightarrow F_{i+1}^\lor(d_i-d_{n+1}) \longrightarrow 0\vspace{2mm}\\
0 \longrightarrow F_{i+1}^\lor(d_i-d_{n+1}) \longrightarrow \OO_{\PP^n}^{\beta_{i+1}}(d_i-d_{i+1}) \longrightarrow F_{i}^\lor(d_i-d_{n+1}) \longrightarrow 0\vspace{2mm}\\
0 \longrightarrow F_{i}^\lor(d_i-d_{n+1}) \longrightarrow \OO_{\PP^n}^{\beta_i} \longrightarrow F_{i-1}^\lor(d_i-d_{n+1}) \longrightarrow 0
\end{array}
$$
Suppose that $i<n-1$ (or else the computation comes directly considering only the first exact sequence and we will obtain the same result), we have that
$$
H^n(F_i^\lor(d_i-d_{n+1})) \simeq H^{n-1}(F_i^\lor(d_i-d_{n+1})) = 0
$$
and also
$$
H^{n-1}(F_j^\lor(d_i-d_{n+1}))=0, \:\:\mbox{for each}\:\: j=i+1,\ldots,n-2.
$$
We can conclude that
$$
h^i(F_i^\lor(d_i-d_{n+1})) = h^{n-1}(F_{n-1}^\lor(d_i-d_{n+1})) =
\left\{
\begin{array}{ll}
\sum_{k=i+1}^{n+1} (-1)^{k+1} \beta_k \binom{d_k-d_i-1}{n} & \mbox{for} \:\:n\:\:\mbox{even}\vspace{2mm}\\
\sum_{k=i+1}^{n+1} (-1)^{k} \beta_k \binom{d_k-d_i-1}{n} & \mbox{for} \:\:n\:\:\mbox{odd}.
\end{array}
\right.
$$
Let us focus now on the cohomology of the bundle $F_{i-1}(d_{n+1}-d_i)$. \\
We obtain by Serre duality that
$$
H^k(F_{i-1}(d_{n+1}-d_i)) \simeq H^{n-k}(F_{i-1}^\lor(d_i-d_{n+1}-n-1));
$$
therefore, we already know that
\begin{itemize}
\item $H^k(F_{i-1}(d_{n+1}-d_i))=0$ if $k\neq n-i+1$,\\
\item $H^{n-i+1}(F_{i-1}(d_{n+1}-d_i)) \simeq H^{i-1}(F_{i-1}^\lor(d_i-d_{n+1}-n-1)).$
\end{itemize}
As before, we have the isomorphisms
$$
H^{i-1}(F_{i-1}^\lor(d_i-d_{n+1}-n-1)) \simeq H^1(F_1^{\lor}(d_i - d_{n+1} - n- 1)) \simeq H^{n-1}(F_{n-1}^{\lor}(d_i - d_{n+1} - n - 1)).
$$
Using the same techniques as before, if we focus on the first isomorphism, then we have to consider the exact sequences
$$
\begin{array}{c}
0 \longrightarrow F_1^\lor(d_i-d_{n+1}-n-1) \longrightarrow \OO_{\PP^n}^{\beta_1}(d_i-d_1-n-1) \longrightarrow \OO^{\beta_0}_{\PP^n}(d_i-n-1) \longrightarrow 0\vspace{2mm}\\
0 \longrightarrow F_2^\lor(d_i - d_{n+1}-n-1) \longrightarrow \OO_{\PP^n}^{\beta_2}(d_i - d_2-n-1) \longrightarrow F_1^\lor(d_i-d_{n+1}-n-1) \longrightarrow 0\\
\vdots\\
0 \longrightarrow F_{i-1}^\lor(d_i - d_{n+1}-n-1) \longrightarrow \OO_{\PP^n}^{\beta_{i-1}}(d_i - d_{i-1}-n-1) \longrightarrow F_{i-2}^\lor(d_i-d_{n+1}-n-1) \longrightarrow 0\vspace{2mm}\\
0 \longrightarrow F_i^\lor(d_i - d_{n+1}-n-1) \longrightarrow \OO_{\PP^n}^{\beta_i}(-n-1) \longrightarrow F_{i-1}^\lor(d_i-d_{n+1}-n-1) \longrightarrow 0
\end{array}
$$
and knowing that if $i\geq 3$ (or else, as before, I only consider the first short exact sequence and obtain the same result), we have $H^0(F_{i-1}^\lor(d_i-d_{n+1}-n-1)) = H^1(F_{i-1}^\lor(d_i-d_{n+1}-n-1)) = 0$ and therefore
$$
h^{n-i+1}(F_{i-1}(d_{n+1}-d_i)) = h^1(F_1^\lor(d_i-d_{n+1}-n-1)) = \sum_{k=0}^{i-1}(-1)^k \beta_k \binom{d_i-d_k-1}{n}.
$$
Let us focus now on the other isomorphism, computing $H^{n-1}(F_{n-1}^\lor(d_i-d_{n+1}-n-1))$. Take the sequences
$$
\begin{array}{c}
0 \longrightarrow \OO_{\PP^n}^{\beta_{n+1}}(d_i-d_{n+1}-n-1) \longrightarrow \OO_{\PP^n}^{\beta_n}(d_i-d_n-n-1) \longrightarrow F_{n-1}^\lor(d_i-d_{n+1}-n-1) \longrightarrow 0\vspace{2mm}\\
0 \longrightarrow F_{n-1}^\lor(d_i-d_{n+1}-n-1) \longrightarrow \OO_{\PP^n}^{\beta_{n-1}}(d_i-d_{n+1}-n-1) \longrightarrow F_{n-2}^\lor(d_i-d_{n+1}-n-1) \longrightarrow 0\\
\vdots\\
0 \longrightarrow F_{i+2}^\lor(d_i-d_{n+1}-n-1) \longrightarrow \OO_{\PP^n}^{\beta_{i+2}}(d_i-d_{i+2}-n-1) \longrightarrow F_{i+1}^\lor(d_i-d_{n+1}-n-1) \longrightarrow 0\vspace{2mm}\\
0 \longrightarrow F_{i+1}^\lor(d_i-d_{n+1}-n-1) \longrightarrow \OO_{\PP^n}^{\beta_{i+1}}(d_i-d_{i+1}-n-1) \longrightarrow F_{i}^\lor(d_i-d_{n+1}-n-1) \longrightarrow 0\vspace{2mm}\\
0 \longrightarrow F_{i}^\lor(d_i-d_{n+1}-n-1) \longrightarrow \OO_{\PP^n}^{\beta_i}(-n-1) \longrightarrow F_{i-1}^\lor(d_i-d_{n+1}-n-1) \longrightarrow 0
\end{array}
$$
and, being $i-1<n-1$ we can state that $H^{n-1}(F_{i-1}^\lor(d_i-d_{n+1}-n-1)) = H^n(F_{i-1}^\lor(d_i-d_{n+1}-n-1))=0$ and also that
$$
H^{n-1}(F_j^\lor(d_i-d_{n+1}-n-1))=0 \:\:\mbox{for each}\:\:j=i,\ldots,n-2.
$$
We obtain that
$$
h^{n-i+1}(F_{i-1}(d_{n+1}-d_i)) = h^{n-1}(F_{n-1}^\lor(d_i-d_{n+1}-n-1)) =
\left\{
\begin{array}{ll}
\sum_{k=i}^{n+1} (-1)^{k+1} \beta_k \binom{d_k-d_i+n}{n} & \mbox{for}\:\:n\:\:\mbox{even} \vspace{2mm}\\
\sum_{k=i}^{n+1} (-1)^{k} \beta_k \binom{d_k-d_i+n}{n} & \mbox{for}\:\:n\:\:\mbox{odd}
\end{array}
\right.
$$
Let us fix some notation, for each $i$ fixed we will call
$$
\begin{array}{l}
\Sigma_{i,1} = h^i(F_i^\lor(d_i-d_{n+1}))\vspace{2mm}\\
\Sigma_{i,2} = h^{n-i+1}(F_{i-1}(d_{n+1}-d_i)).
\end{array}
$$
We have learned that for each $i$ fixed from $2$ to $n-1$ the cohomology group of $F_i^\lor(d_i-d_{n+1})$ which may not vanish is the $i$-th group, hence the important part of the exact sequence induced in cohomology by (\ref{excep-induction1}) is
\begin{equation}\label{exc-cohom-i}
\begin{array}{c}
\longrightarrow \underbrace{H^{i-1}((F_i^\lor)^{\beta_i}(d_i-d_{n+1}))}_{=0} \longrightarrow H^{i-1}(F_i^\lor \otimes F_i) \longrightarrow H^i(F_{i-1}^{\beta_i}(d_{n+1}-d_i)) \longrightarrow \underbrace{H^i((F_i^\lor)^{\beta_i}(d_i-d_{n+1}))}_{\mbox{dimension}\:\:\beta_i \Sigma_{i,1}} \longrightarrow \\
 \longrightarrow H^{i}(F_i^\lor \otimes F_i) \longrightarrow H^{i+1}(F_{i-1}^{\beta_i}(d_{n+1}-d_i)) \longrightarrow \underbrace{H^{i+1}((F_i^\lor)^{\beta_i}(d_i-d_{n+1}))}_{=0} \longrightarrow
\end{array}
\end{equation}
 We now need to check out how the non-vanishing group in cohomology, associated to the bundle $F_{i-1} (d_{n+1}-d_i)$, relates to the first group, we can have the following situations.\\
 \textbf{Case 1} If $i\neq \frac{n+1}{2}$ and $i\neq\frac{n}{2}$, which means that $n-i \neq i-1$ and $n-i-1 \neq i-1$, then the two groups do not both belong in the sequence (\ref{exc-cohom-i}) and we have
 $$
 H^{n-i+1}(F_{i-1}^{\beta_i}(d_{n+1}-d_i)) \simeq H^{n-i}(F_i^\lor \otimes F_i)
 $$
 and
 $$
 H^i((F_i^\lor)^{\beta_i}(d_i - d_{n+1})) = H^i(F_i^\lor \otimes F_i)
 $$
 hence $F_i$ is exceptional if and only if $H^{n-i+1}(F_{i-1}(d_{n+1}-d_i)) =  H^i(F_i^\lor(d_i - d_{n+1})) = 0$.\\
 \textbf{Case 2} If $i=\frac{n}{2}$, so only in the even cases, we are in the following situation
 $$
0 \longrightarrow \underbrace{H^i((F_i^\lor)^{\beta_i}(d_i-d_{n+1}))}_{\mbox{dimension}\:\:\beta_i \Sigma_{i,1}} \longrightarrow H^{i}(F_i^\lor \otimes F_i) \longrightarrow \underbrace{H^{i+1}(F_{i-1}^{\beta_i}(d_{n+1}-d_i))}_{\beta_i \Sigma_{2,i}} \longrightarrow 0.
 $$
 Being $\Sigma_{p,i}\geq 0$ for $p=1,2$, we can state that $F_i$ is exceptional if and only if $$H^i((F_i^\lor)^{\beta_i}(d_i-d_{n+1}))=H^{i+1}(F_{i-1}^{\beta_i}(d_{n+1}-d_i))=0.$$\\
 \textbf{Case 3} If $i=\frac{n+1}{2}$, so only in the odd cases, we are in the following situation
$$
0 \longrightarrow H^{i-1}(F_i^\lor \otimes F_i) \longrightarrow \underbrace{H^i(F_{i-1}^{\beta_i}(d_{n+1}-d_i))}_{\mbox{dimension}\:\:\beta_i \Sigma_{i,2}} \stackrel{H^i(\varphi)}{\longrightarrow} \underbrace{H^i((F_i^\lor)^{\beta_i}(d_i-d_{n+1}))}_{\mbox{dimension}\:\:\beta_i \Sigma_{i,1}} \longrightarrow H^{i}(F_i^\lor \otimes F_i) \longrightarrow 0,
$$
where $H^i(\varphi)$ is the morphism induced in cohomology by $\varphi: F_i^\lor \otimes F_{i-1} \rightarrow (F_i^\lor)^{\beta_i}(d_i-d_{n+1})$. Therefore $F_i$ is exceptional if and only if $H^{i-1}(F_i^\lor \otimes F_i) = H^i(F_i^\lor \otimes F_i)=0$ if and only if $H^i(\varphi)$ is an isomorphism.\\
This concludes the proof.
\end{proof}

\begin{Corollary}
If each bundle $F_i$, for $i$ from 1 no $n-1$ and defined in (\ref{short-dual}), is a Steiner bundle, i.e. the pair $(F_{i-1}, \OO_{\PP^n}(d_i-d_{n+1})$ is strongly exceptional, where $F_i$ is defined as
$$
0\longrightarrow F_{i-1} \longrightarrow \OO_{\PP^n}^{\beta_i}(d_i-d_{n+1}) \longrightarrow F_i \longrightarrow 0,
$$
and $\beta_0^2 + \beta_1^2 - \binom{d_1+n}{n}\beta_0\beta_1 = 1$; then all bundles $F_i$ are exceptional.
\end{Corollary}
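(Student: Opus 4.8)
The plan is to reduce the statement entirely to Theorem~\ref{thm-excep}, by checking that its three conditions (i)--(iii), together with its standing simplicity hypothesis, are \emph{forced} by the Steiner assumption. Writing the strong exceptionality of the pair $(F_{i-1},\OO_{\PP^n}(d_i-d_{n+1}))$ in cohomological terms (Definition~\ref{def-cok}), it says precisely that $\Ext^k(\OO_{\PP^n}(d_i-d_{n+1}),F_{i-1})=H^k(F_{i-1}(d_{n+1}-d_i))=0$ for every $k\geq0$, and that $\Ext^k(F_{i-1},\OO_{\PP^n}(d_i-d_{n+1}))=H^k(F_{i-1}^\lor(d_i-d_{n+1}))=0$ for every $k\geq1$. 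These vanishings supply all the ingredients.

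First I would settle simplicity. The hypothesis $\beta_0^2+\beta_1^2-\binom{d_1+n}{n}\beta_0\beta_1=1$ is exactly the value $1$ of the quadratic form attached, in the sense of \cite{bramb}, to the cokernel bundle $F_1=\operatorname{coker}(\OO_{\PP^n}^{\beta_0}(-d_{n+1})\to\OO_{\PP^n}^{\beta_1}(d_1-d_{n+1}))$, whose defining pair is a cokernel-bundle pair by the Steiner hypothesis; hence by \cite[Theorem~4.3]{bramb} the bundle $F_1$ is simple. Theorem~\ref{thm-f1} then propagates simplicity to every $F_i$, so the standing hypothesis of Theorem~\ref{thm-excep} holds, while condition (i) is nothing but the assumption itself.

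For condition (ii) I would specialise the $i=1$ pair to $(\OO_{\PP^n}(-d_{n+1}),\OO_{\PP^n}(d_1-d_{n+1}))$: strong exceptionality forces the higher Ext $\Ext^n(\OO_{\PP^n}(d_1-d_{n+1}),\OO_{\PP^n}(-d_{n+1}))=H^n(\OO_{\PP^n}(-d_1))$ to vanish, which happens exactly when $d_1\leq n$. For condition (iii) the first vanishing, $H^{n-i+1}(F_{i-1}(d_{n+1}-d_i))=0$, is immediate, being one of the groups $\Ext^{n-i+1}(\OO_{\PP^n}(d_i-d_{n+1}),F_{i-1})$ already killed. For the companion group $H^i(F_i^\lor(d_i-d_{n+1}))$ I would dualise the defining sequence and twist by $\OO_{\PP^n}(d_i-d_{n+1})$, obtaining $0\to F_i^\lor(d_i-d_{n+1})\to\OO_{\PP^n}^{\beta_i}\to F_{i-1}^\lor(d_i-d_{n+1})\to0$; since $H^{i-1}(\OO_{\PP^n}^{\beta_i})=H^i(\OO_{\PP^n}^{\beta_i})=0$ for $2\leq i\leq n-1$, the connecting map yields $H^i(F_i^\lor(d_i-d_{n+1}))\simeq H^{i-1}(F_{i-1}^\lor(d_i-d_{n+1}))=\Ext^{i-1}(F_{i-1},\OO_{\PP^n}(d_i-d_{n+1}))=0$. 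This covers Cases~1 and~2 of Theorem~\ref{thm-excep}; in the self-dual middle case $i=\frac{n+1}{2}$ (Case~3) both source and target of $H^i(\varphi)$ are now zero, so $H^i(\varphi)$ is an isomorphism vacuously. With (i), (ii), (iii) and simplicity in hand, Theorem~\ref{thm-excep} gives exceptionality of every $F_i$.

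The hard part will be the group $H^i(F_i^\lor(d_i-d_{n+1}))$, which is the only one not literally among the Ext groups of the $i$-th strongly exceptional pair, since it involves the cokernel $F_i$ rather than $F_{i-1}$; the dualise-and-twist step identifying it with $\Ext^{i-1}(F_{i-1},\OO_{\PP^n}(d_i-d_{n+1}))$ is the crux, and it is here that the vanishing of the intermediate terms $H^{i-1}(\OO_{\PP^n}^{\beta_i})$ and $H^i(\OO_{\PP^n}^{\beta_i})$ --- hence the range $2\leq i\leq n-1$ --- is essential.
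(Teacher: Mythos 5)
Your proposal is correct and follows the same route as the paper: the paper's proof is the single remark that the cohomological vanishings in the definition of a strongly exceptional pair imply hypothesis (iii) of Theorem \ref{thm-excep}, and your argument is precisely a detailed verification of that implication, with the key step being the identification $H^i(F_i^\lor(d_i-d_{n+1}))\simeq H^{i-1}(F_{i-1}^\lor(d_i-d_{n+1}))=\Ext^{i-1}(F_{i-1},\OO_{\PP^n}(d_i-d_{n+1}))$ via the dualised, twisted defining sequence. In fact you go further than the printed proof by also deriving condition (ii) from the $i=1$ pair and by accounting for the simplicity hypothesis (which is anyway largely contained in the Steiner assumption, since each $E_0=F_{i-1}$ must be simple); the only small caveat is that invoking \cite[Theorem 4.3]{bramb} for the simplicity of $F_1$ presumes genericity of the defining map, but this is immaterial since $F_1$'s simplicity is already part of the hypothesis for the $i=2$ pair and Theorem \ref{thm-f1} then propagates it.
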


\begin{proof}
The cohomological vanishings appearing in the definition of strongly exceptional pairs of vector bundles, imply the hypothesis (iii) of Theorem \ref{thm-excep}.
\end{proof}

We would like to know if the viceversa of the previous result holds, but at the moment we are only able to state the following.

\begin{Conjecture}
Considering $n$ odd and $i=\frac{n+1}{2}$, if we prove that the two cohomology groups $$H^{n-i+1}(F_{i-1}(d_{n+1}-d_i))\:\:\: \mbox{and} \:\:\:H^i(F_i^\lor(d_i - d_{n+1}))$$ are isomorphic if and only if they are zero; then we would be able to state that the syzygy bundles $F_i$ are Steiner if and only if they are also exceptional.
\end{Conjecture}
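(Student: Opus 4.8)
The plan is to establish the single missing ingredient isolated by the statement: for $n$ odd and $i=\frac{n+1}{2}$, the map $H^i(\varphi)$ of Case 3 in the proof of Theorem \ref{thm-excep} is an isomorphism if and only if both groups $H^{n-i+1}(F_{i-1}(d_{n+1}-d_i))$ and $H^i(F_i^\lor(d_i-d_{n+1}))$ vanish. Granting this, the equivalence ``Steiner $\iff$ exceptional'' follows formally: for every index $i\neq\frac{n+1}{2}$ the condition (iii) of Theorem \ref{thm-excep} already coincides with the $\Ext$-vanishings forced by strong exceptionality of the pair $(F_{i-1},\OO_{\PP^n}(d_i-d_{n+1}))$, while at the middle index the claimed equivalence turns the isomorphism condition of Case 3 into exactly the same pair of vanishings. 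Thus the entire problem is concentrated in controlling the \emph{rank} of $H^i(\varphi)$ at the self-dual middle degree, and this is what I would attack.

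First I would pursue the numerical side. The proof of Theorem \ref{thm-excep} already records closed expressions for $\Sigma_{i,1}=h^i(F_i^\lor(d_i-d_{n+1}))$ and $\Sigma_{i,2}=h^{n-i+1}(F_{i-1}(d_{n+1}-d_i))$ as alternating sums of products $\beta_k\binom{\cdot}{n}$. I would substitute the Herzog--Kühl relations, which for a pure resolution express every $\beta_k$ as a fixed rational multiple of $\beta_0$ determined by the degree sequence $d_0<\cdots<d_{n+1}$, so that both $\Sigma_{i,1}$ and $\Sigma_{i,2}$ become explicit functions of the $d_j$ alone. The hope is that the reflection symmetry available precisely at $i=\frac{n+1}{2}$ (where $n-i=i-1$ and $n-i+1=i$) forces a rigid relation between the two sums, ideally showing that the difference $\Sigma_{i,1}-\Sigma_{i,2}$ keeps a constant sign, so that the equality of dimensions demanded by an isomorphism already pins both numbers to a boundary value which one would then identify with $0$.

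The second, and decisive, step is to pass from dimensions to the map itself: equal dimensions are not enough, and one must show that $H^i(\varphi)$ cannot be a \emph{nonzero} isomorphism. Here I would exploit that $F_i^\lor\otimes F_i$ is self-dual and that $i$ is exactly the middle cohomological degree, so that Serre duality identifies the source $H^{i-1}(F_i^\lor\otimes F_i)$ with the dual of $H^i(F_i^\lor\otimes F_i)$ up to the twist by $\OO_{\PP^n}(-n-1)$, presenting $H^i(\varphi)$ as essentially paired with a transpose of itself. If one can show that the resulting bilinear form is alternating, then in the middle degree it is forced to be degenerate whenever the underlying space is nonzero, so $H^i(\varphi)$ acquires a kernel and cannot be an isomorphism unless the groups vanish; this self-duality and parity mechanism is, I believe, the right conceptual route.

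The hard part will be exactly this last step: verifying that the pairing coming from Serre duality at the middle degree is genuinely (anti)symmetric, and reconciling the twist by $\OO_{\PP^n}(-n-1)$, since $H^i(\varphi)$ is built from the generic splitting map $\alpha_i\colon F_{i-1}\to\OO_{\PP^n}^{\beta_i}(d_i-d_{n+1})$ tensored by $F_i^\lor$, and its rank depends on data not visible in the Betti numbers. I would test the mechanism first on the smallest genuinely odd case, $n=3$ and $i=2$, where the two groups are $H^2(F_1(d_4-d_2))$ and $H^2(F_2^\lor(d_2-d_4))$, in order to confirm the (anti)symmetry and the parity obstruction on an explicit example before attempting the general inductive argument.
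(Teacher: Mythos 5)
This statement is stated in the paper as an open \emph{Conjecture} --- the authors explicitly say they are ``only able to state'' it --- so there is no proof in the paper to compare against. Your first paragraph correctly identifies the formal reduction: granting the equivalence ``isomorphic $\iff$ zero'' at the middle index $i=\frac{n+1}{2}$, Case 3 of the proof of Theorem \ref{thm-excep} would convert the isomorphism condition into the same pair of vanishings that strong exceptionality of $(F_{i-1},\OO_{\PP^n}(d_i-d_{n+1}))$ demands, and the other indices are already handled by conditions (i)--(iii). That part is essentially a restatement of what the conjecture asserts, and it is consistent with the paper's framing.

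The substantive content --- proving that $H^i(\varphi)$ cannot be a nonzero isomorphism --- is not established by your proposal, and the two mechanisms you suggest both have concrete problems. First, the Serre duality you invoke pairs $H^{i-1}(F_i^\lor\otimes F_i)$ with $H^{n-i+1}\bigl((F_i^\lor\otimes F_i)^\lor(-n-1)\bigr)=H^{i}\bigl(F_i\otimes F_i^\lor(-n-1)\bigr)$, which is \emph{not} the group $H^i(F_i^\lor\otimes F_i)$ appearing as the target of $H^i(\varphi)$: the twist by $\OO_{\PP^n}(-n-1)$ is not a bookkeeping issue to ``reconcile'' but destroys the self-pairing on which the whole argument rests. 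Second, even if one did obtain an alternating form on a single space, such a form is \emph{not} forced to be degenerate on a nonzero space --- symplectic forms are nondegenerate on every even-dimensional space --- so the parity obstruction you describe only rules out odd-dimensional middle cohomology, not the isomorphism itself. Finally, the numerical step via the Herzog--K\"uhl relations can at best show $\Sigma_{i,1}=\Sigma_{i,2}$ or constrain their difference, which is exactly the situation compatible with $H^i(\varphi)$ being a nonzero isomorphism; it cannot decide the question. The proposal is a reasonable research plan, but as a proof it has a genuine gap precisely at the step the conjecture isolates as the open one.
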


As for the results implying simplicity, also for the last theorem we have a correspondent result obtained considering the dual resolution. Recall that the bundles $F_i$ are simple or exceptional if and only if the bundles $G_i$ are.

\begin{Theorem}
Consider the syzygy bundles $G_i$ as defined in (\ref{short-orig}), for $i$ from 1 to $n-1$. Suppose also that $G_i$ are simple for each $i$; then $G_i$, for $i=1,\ldots,n-1$, is exceptional if and only if each one of the following conditions hold
\begin{description}
\item[i)] $\beta_{n+1}^2 + \beta_n^2 - \binom{d_{n+1}-d_n+n}{n}\beta_{n+1}\beta_n = 1$;\\
\item[ii)] $d_{n+1}-d_n \leq n;$
\item[iii)]
$$
\left\{
\begin{array}{ll}
H^{n-i+1}(G_{i-1}(d_{n+1-i})) = H^i(G_i^\lor(-d_{n+1-i})) = 0 & \mbox{if}\:\;n\:\;\mbox{is even;}\vspace{3mm}\\
H^{n-i+1}(G_{i-1}(d_{n+1-i})) = H^i(G_i^\lor(-d_{n+1-i})) = 0 & \mbox{if}\:\;n\:\;\mbox{is odd and} \;\: i \neq \frac{n+1}{2};\vspace{3mm}\\
H^{n-i+1}(G_{i-1}(d_{n+1-i})) \stackrel{H^i(\varphi)}{\simeq} H^i(G_i^\lor(-d_{n+1-i})) & \mbox{if}\:\;n\:\;\mbox{is odd and} \;\: i = \frac{n+1}{2}.
\end{array}
\right.
$$
where, if $n$ is odd and $i = \frac{n+1}{2}$,  we get $H^{n-i+1}(G_{i-1}(d_{n+1-i})) \simeq H^i(G_{i-1} \otimes G_i^\lor)$ and the morphism $H^i(\varphi): H^i(G_i^\lor \otimes G_{i-1}) \rightarrow H^i((G_i^\lor)^{\beta_i} (-d_{n+1-i}))$ is the one obtained by the short exact sequence
$$
0 \longrightarrow G_i^\lor \otimes G_{i-1} \stackrel{\varphi}{\longrightarrow} (G_i^\lor)^{\beta_{n+1-i}} (-d_{n+1-i}) \longrightarrow G_i^\lor \otimes G_i \longrightarrow 0
$$
considering the long exact induced in cohomology.
\end{description}
\end{Theorem}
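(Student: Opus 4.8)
The plan is to deduce this statement directly from Theorem \ref{thm-excep} by the reflection argument already used at the end of the proofs of Theorem \ref{thm-b0} and Theorem \ref{thm-f1}; no new cohomological computation is required. Concretely, I would set $\tilde{d}_i = d_{n+1} - d_{n+1-i}$ and $\tilde{\beta}_i = \beta_{n+1-i}$ for $i = 0,\ldots,n+1$. Since $0 = d_0 < d_1 < \cdots < d_{n+1}$, these satisfy $0 = \tilde{d}_0 < \tilde{d}_1 < \cdots < \tilde{d}_{n+1} = d_{n+1}$, so they form an admissible degree sequence. Writing the dual resolution (\ref{seq-dual}) with these relabelled data exhibits it as a pure resolution of the shape (\ref{seq-orig}), and it is to this reflected resolution that I would apply Theorem \ref{thm-excep}. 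The first thing to establish is that the dual-syzygy bundles attached to it (in the sense of (\ref{short-dual})) are exactly the $G_i$.

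To check this, I would form the dual of the reflected resolution and twist by $\OO_{\PP^n}(-d_{n+1})$: an immediate calculation shows that this returns precisely the original resolution (\ref{seq-orig}). Since all the sheaves involved are locally free, dualizing preserves exactness, so its splitting into short exact sequences in the manner of (\ref{short-dual}) reads, after substituting $\tilde{\beta}_{j+1} = \beta_{n-j}$ and $\tilde{d}_{j+1} - \tilde{d}_{n+1} = -d_{n-j}$, as
$$
0 \longrightarrow \tilde{F}_j \longrightarrow \OO_{\PP^n}^{\beta_{n-j}}(-d_{n-j}) \longrightarrow \tilde{F}_{j+1} \longrightarrow 0,
$$
which is literally the sequence (\ref{short-orig}). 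Comparing the two splittings term by term identifies the $i$-th dual-syzygy $\tilde{F}_i$ of the reflected resolution with $G_i$. In particular the standing hypothesis that each $G_i$ is simple is exactly the simplicity hypothesis of Theorem \ref{thm-excep} for $\tilde{F}_i$, so that theorem applies verbatim.

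It then remains to translate its conclusion through the substitution $\beta_i \mapsto \tilde{\beta}_i = \beta_{n+1-i}$, $d_i \mapsto \tilde{d}_i = d_{n+1} - d_{n+1-i}$, $F_i \mapsto G_i$. Condition (i) of Theorem \ref{thm-excep}, namely $\tilde{\beta}_0^2 + \tilde{\beta}_1^2 - \binom{\tilde{d}_1 + n}{n}\tilde{\beta}_0\tilde{\beta}_1 = 1$, becomes $\beta_{n+1}^2 + \beta_n^2 - \binom{d_{n+1}-d_n+n}{n}\beta_{n+1}\beta_n = 1$; condition (ii), $\tilde{d}_1 \leq n$, becomes $d_{n+1}-d_n \leq n$; and using $\tilde{d}_{n+1} - \tilde{d}_i = d_{n+1-i}$ the cohomological conditions (iii) become $H^{n-i+1}(G_{i-1}(d_{n+1-i})) = H^i(G_i^\lor(-d_{n+1-i})) = 0$, with the isomorphism case ($n$ odd, $i = \frac{n+1}{2}$) governed by the long exact sequence of
$$
0 \longrightarrow G_i^\lor \otimes G_{i-1} \longrightarrow (G_i^\lor)^{\beta_{n+1-i}}(-d_{n+1-i}) \longrightarrow G_i^\lor \otimes G_i \longrightarrow 0,
$$
which is the reflection of (\ref{excep-induction1}). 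These are precisely conditions (i)--(iii) of the present statement.

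I expect the only delicate point to be the index bookkeeping: tracking how the Betti numbers, twists and cohomological degrees transform under the involution $i \mapsto n+1-i$, and verifying that the parity-dependent trichotomy in condition (iii) is preserved. Here one checks that the discriminating relations $n-i \neq i-1$ and $n-i-1 \neq i-1$ of Case~1--Case~3 in the proof of Theorem \ref{thm-excep} are symmetric in this reflection, so the even/odd case distinction and the distinguished value $i=\frac{n+1}{2}$ are matched up correctly. Once the dictionary $\tilde{F}_i = G_i$ and the substitution above are in place, every vanishing and every isomorphism needed is obtained by reading the computations in the proof of Theorem \ref{thm-excep} off the reflected resolution, and the statement follows.
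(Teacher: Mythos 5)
Your proposal is correct and is exactly the argument the paper intends: the paper gives no explicit proof, only the remark that the result follows ``considering the dual resolution,'' and your reflection via $\tilde{d}_i = d_{n+1}-d_{n+1-i}$, $\tilde{\beta}_i = \beta_{n+1-i}$ is the same device used at the end of the proofs of Theorem \ref{thm-b0} and Theorem \ref{thm-f1}. Your identification $\tilde{F}_i = G_i$ and the translation of conditions (i)--(iii) check out, including the observation that the parity trichotomy depends only on $i$ and $n$ and is therefore preserved.
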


\section{Examples}

In this section we present some famous pure resolutions and we will apply the results obtained to determine whenever the syzygies are simple or exceptional. Some of these resolutions were studied by \cite{Prata}.

\subsection{Pure linear resolution}

Let $R = \KK[x_0, \ldots,x_n]$ be the ring of polynomials and $I = (x_0, \cdots, x_n)$ be the ideal generated by the coordinate variables . The Koszul complex $K(x_0, \cdots, x_n)$ is given by

$$\xymatrix{0 \ar[r] & R(-n -1)) \ar[r] & R^{\binom{n+1}{n}}(-n) \ar[r] & \cdots \ar[r] &  R^{n+1}(-1) \ar[r] & R \ar[r] & R/I \ar[r] & 0 }$$ Sheafifying we get the exact sequence

\begin{eqnarray}\label{Kcomp}
\xymatrix{0 \ar[r] & \opn(-n-1)) \ar[r] & \opn^{\binom{n+1}{n}}(-n) \ar[r] & \ldots \ar[r]  & \opn^{n+1}(-1) \ar[r] & \opn \ar[r] & 0}.
\end{eqnarray}

\begin{Proposition}
The syzygy bundles arising from the complex (\ref{Kcomp}) are all simple and exceptional.
\end{Proposition}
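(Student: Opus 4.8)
The plan is to read off the Koszul complex (\ref{Kcomp}) as an instance of the general resolution (\ref{seq-orig}) and then apply Theorem \ref{thm-b0} for simplicity and Theorem \ref{thm-excep} for exceptionality. Comparing (\ref{Kcomp}) with (\ref{seq-orig}) gives $d_i = i$ and $\beta_i = \binom{n+1}{i}$ for every $i$; in particular $\beta_0 = \beta_{n+1} = 1$. Since the Koszul complex resolves the cyclic module $R/I$, the hypothesis $\beta_0 = 1$ of Theorem \ref{thm-b0} holds, so every syzygy bundle $F_i$ is simple, and by the duality remark the $G_i$ are simple as well. This settles the simplicity assertion.

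For exceptionality I would check the three conditions of Theorem \ref{thm-excep}. Conditions (i) and (ii) are immediate: with $d_1 = 1$ one has $\binom{d_1+n}{n} = \binom{n+1}{n} = n+1$, whence
\[
\beta_0^2 + \beta_1^2 - \binom{d_1+n}{n}\beta_0\beta_1 = 1 + (n+1)^2 - (n+1)(n+1) = 1,
\]
which is (i), while (ii) is just $d_1 = 1 \leq n$.

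The real content is condition (iii), and the key observation is that the syzygies of (\ref{Kcomp}) are the bundles of differential forms. Taking $p$-th exterior powers of the dual Euler sequence produces the short exact sequences
\[
0 \longrightarrow \Omega^p_{\PP^n} \longrightarrow \opn(-p)^{\binom{n+1}{p}} \longrightarrow \Omega^{p-1}_{\PP^n} \longrightarrow 0,
\]
which match (\ref{short-orig}) term by term, so that $G_i = \Omega^{n-i}_{\PP^n}$ for $i = 1, \ldots, n-1$ (the two ends being $\Omega^0 = \opn$ and $\Omega^n = \opn(-n-1)$); the $F_i$ are the corresponding twisted forms for the self-dual complex (\ref{seq-dual}). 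Under this identification every cohomology group appearing in (iii) is computed by Bott's formula for $H^q(\PP^n, \Omega^p(k))$. I expect the groups $H^{n-i+1}(F_{i-1}(d_{n+1}-d_i))$ and $H^i(F_i^\lor(d_i-d_{n+1}))$ to vanish for all $i \neq \frac{n+1}{2}$, since for these twists Bott's formula lands outside its non-vanishing ranges. The one delicate point, and the main obstacle, is the odd middle case $n$ odd, $i = \frac{n+1}{2}$, where both groups are nonzero and (iii) instead requires the connecting map $H^i(\varphi)$ to be an isomorphism. I would dispatch this not by a direct computation of $H^i(\varphi)$ but by invoking the classical fact that each $\Omega^p_{\PP^n}$ is exceptional, i.e. $\Ext^q(\Omega^p,\Omega^p) = H^q(\Omega^p \otimes (\Omega^p)^\vee) = 0$ for $q > 0$ (a standard consequence of Bott's formula, underlying the Beilinson collection). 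This is precisely the vanishing of $H^{i-1}(F_i^\lor \otimes F_i)$ and $H^i(F_i^\lor \otimes F_i)$, which by the exact sequence of Case 3 in the proof of Theorem \ref{thm-excep} forces $H^i(\varphi)$ to be an isomorphism. With all three conditions verified, every $F_i$ is exceptional, and hence so is every $G_i$.
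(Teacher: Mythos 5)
Your proposal is correct and follows the paper's route exactly: the paper's own proof is the single sentence that ``it is a simple computation'' that the hypotheses of Theorem \ref{thm-b0} and Theorem \ref{thm-excep} are satisfied, and you have actually carried out that computation, with the identification $F_i \cong G_i \cong \Omega^{n-i}_{\PP^n}$ and Bott's formula doing the work for condition (iii). One factual slip worth correcting: in the middle case $n$ odd, $i=\tfrac{n+1}{2}$, both groups in question are $H^i(\Omega^i_{\PP^n}(i))$, which \emph{vanishes} by Bott's formula since the twist $i\neq 0$ — they are not ``both nonzero'' as you assert — so the required isomorphism is the trivial one between zero spaces; this agrees with the explicit formulas in the proof of Theorem \ref{thm-excep}, where $\sum_{k=0}^{i}(-1)^k\binom{n+1}{k}\binom{i-k+n}{n}=0$ and $\sum_{k=0}^{i-1}(-1)^k\binom{n+1}{k}\binom{i-k-1}{n}=0$ for the Koszul data. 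Your patch via the classical exceptionality of $\Omega^p_{\PP^n}$ is still logically valid, but note that once you invoke that fact the exceptionality of every $F_i$ already follows from $F_i\cong\Omega^{n-i}_{\PP^n}$, so the detour through Theorem \ref{thm-excep} becomes unnecessary.
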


\begin{proof}
It is a simple computation that the complex satisfies the hypothesis of Theorem \ref{thm-b0} and of Theorem \ref{thm-f1} for simplicity, and the hypothesis of Theorem \ref{thm-excep} for the exceptionality.

\end{proof}

\subsection{Compressed Gorenstein Artinian graded algebras}

Let $I = (f_1, \ldots, f_{\alpha_1}) $ be an ideal generated by $\alpha_1$ forms of degree $t+1$, such that the algebra $A = R/I$ is a compressed Gorenstein Artinian graded algebra of embedding dimension $n+1$ and socle degree $2t$. Thus, by Proposition $3.2$ of \cite{mig-miro-nag}, the minimal free resolution of $A$ is

$$\xymatrix{ 0 \ar[r] & R(-2t-n-1) \ar[r] & R^{\alpha_n}(-t-n) \ar[r] & R^{\alpha_{n-1}}(-t-n+1) \ar[r] & \cdots &}$$
 $$\xymatrix {\cdots \ar[r] & R^{\alpha_p}(-t-p) \ar[r] & \cdots \ar[r] &  R^{\alpha_2}(-t-2) \ar[r] & R^{\alpha_1}(-t-1) \ar[r] & R \ar[r] & A \ar[r] & 0 }$$ where $$\alpha_i = \binom{t+i - 1}{i-1} \binom{t+n+1}{n+1-i} - \binom{t+n-i}{n+1-i} \binom{t+n}{i-1}, \, for  \, i=1, \cdots, n.$$

Sheafifying the complex above we have

\begin{eqnarray}\label{compG}
\xymatrix{ 0 \ar[r] & \OO_{\PP^n}(-2t-n-1) \ar[r] & \OO_{\PP^n}^{\alpha_n}(-t-n) \ar[r] & \OO_{\PP^n}^{\alpha_{n-1}}(-t-n+1) \ar[r] & \cdots}
\end{eqnarray}
\begin{eqnarray*}
\xymatrix{ \cdots \ar[r] & \OO_{\PP^n}^{\alpha_p}(-t-p)  \ar[r] & \cdots \ar[r] &  \OO_{\PP^n}^{\alpha_2}(-t-2) \ar[r] & \OO_{\PP^n}^{\alpha_1}(-t-1) \ar[r]^<<<<{\beta} & \OO_{\PP^n} \ar[r] & 0 }\end{eqnarray*}  where $\beta$ is the map given by the $\alpha_1$ forms of degree $t+1$.

\begin{Remark}  By applying Theorem \ref{thm-b0} , we have that the syzygies $F_i$ of the complex (\ref{compG}) are simple vector bundles. Moreover, ${\rm hd}(F_i) = n-i$, $h^0(F_i^*(-t-i))  =\alpha_i$ for $ 1 \leq i \leq n-1$. If we take $t=1,$ then we get the linear resolution and we already know that all syzygies are exceptional. Nevertheless, it is easy to loose the exceptionality. For instance, if we take $t$ such that $t>n-1$, we do not satisfy the second condition of Theorem \ref{thm-excep}. Moreover, being $\beta_0 = 1$ the first condition of Theorem \ref{thm-excep} is equivalent to prove that $$ \binom{t+n+1}{n} - \binom{t+n-1}{n} = \beta_1 = \binom{d_1+n}{n} = \binom{t+n+1}{n},$$ which are not equal if $t \geq 1$. Hence, for this example, the only exceptional bundles come from the linear resolution.
\end{Remark}

\subsection{Generalized Koszul complex}

The reference for this section is $\cite{miro}$
\begin{Definition}
Let $\mathcal{A}$ be a $p \times q$ matrix with entries in $R$. We say that $\mathcal{A}$ is a $t-$homogeneous matrix if the minors of size $j \times j$ are homogeneous polinomials for all $j \leq t$. The matrix $\mathcal{A}$ is an {\it homogeneous matrix} if their minors of any size are homogeneous.
\end{Definition}
Let $\mathcal{A}$ be an homogeneous matrix. We denote by $I(\mathcal{A})$ the ideal of $R$ generated by the maximal minors of $\mathcal{A}$. Le $\mathcal{A}$ be a $t-$homogeneous matrix. For all $j \leq t$, we denote by $I_j(\mathcal{A})$ the ideal of $R$ generated by the minors of size $j$ of $\mathcal{A}$.\\
Note that to any homogeneous $p \times q$ matrix $\mathcal{A}$, we have a morphism $\varphi : F \rightarrow G $ of free graded $R-$modulos of ranks $p$ and $q$, respectively. We write $I(\varphi) = I(\mathcal{A})$.\\
An homogeneous ideal $I\subset R$ is called {\it determinantal  ideal}  if

\begin{itemize}
\item[$(1)$] there exists a $r-$homogeneous matrix $\mathcal{A}$ of size $p \times q$ with entries in $R$ such that $I = I_t(\mathcal{A})$ and
\item[$(2)$] $ht(I) = (p - r + 1)(q - r + 1).$
\end{itemize}

An homogeneous determinantal ideal $I \subset R$ is called {\it standard determinantal ideal}  if $r = \max\{p,q\}$. That is, an homogeneous ideal  $I \subset R$ of codimension $c$ is called standard determinantal ideal if $I = I_r(\mathcal{A})$ for some homogeneous matrix $\mathcal{A}$ of size $r \times (r+ c-1)$.\\
Let $X \subset \mathbb{P}^{n+c} $, and $\mathcal{A}$ homogeneous matrix associated to $X$. Let $\varphi : F \to G $ be a morphism of free graded $R-$modulos of ranks $t$ and $t+c-1$ respectively, defined by  $\mathcal{A}.$ the generalized Koszul complex $C_i(\varphi^*)$ is given by
$$
\xymatrix{0 \ar[r] & \wedge^{i} G^{*} \otimes S_{0}(F^*) \ar[r] & \wedge^{i-1} G^{*} \otimes S_{1}(F^{*})  \ar[r] & \cdots \ar[r] & \wedge^0 G^{*} \otimes S_{i}(F^{*})\ar[r] & 0}
$$
From this complex we have the complex $D_i(\varphi^*)$
$$
\xymatrix{0 \ar[r] & \wedge^{t+c-1} G^{*} \otimes S_{c-i-1}(F) \otimes \wedge^{t}F \ar[r] &\wedge^{t+c-2} G^{*} \otimes S_{c-i-2}(F) \otimes \wedge^t(F) \ar[r] & \cdots & }
$$
\xymatrix{\cdots \ar[r] &  \wedge^{t+i} G^{*} \otimes S_{0}(F)\otimes \wedge F \ar[r] & \wedge^{i}G^{*} \otimes S_0(F^{*}) \ar[r] & \wedge^{i-1} G^{*} \otimes S_{1}(F^{*}) \ar[r] & \cdots }
$$
\xymatrix{ \cdots \ar[r] & \wedge^{0}G^{*} \otimes S_i F^{*} \ar[r] & 0 }
$$
where $D_0(\varphi^*)$ is called {\it Eagon-Northcott complex} and $D_1(\varphi^*)$ is called {\it Buchsbaum-Rim complex}. \\
Let $\varphi : R(-d)^{a} \rightarrow R^{a+n} $ be a map, let $M$ be the matrix associated to the map and $I = I_a(M)$ be the ideal generated by the maximal minors of $M$. The Eagon-Northcott complex $D_0(\varphi^{*})$ gives us a minimal free resolution of $R/I$

$$\xymatrix{0 \ar[r] & R^{\binom{n+a-1}{n-1}}(-d(n+a)) \ar[r] & R^{(n+a)\binom{n+a-2}{a-1}}(-d(n+a-1)) \ar[r] & \cdots}$$
$$\xymatrix{ \cdots \ar[r] & R^{\binom{a+n}{a}}(-da) \ar[r] & R \ar[r] & R/I \ar[r] & 0}$$

Sheafifying, we get the complex

\begin{eqnarray}\label{EN}
\xymatrix{   0 \ar[r] & \OO_{\PP^n}^{\binom{n+a-1}{a-1}}(-d(n+a)) \ar[r] & \OO_{\PP^n}^{(n+a)\binom{n+a-2}{a-1}}(-d(n+a-1)) \ar[r] & \cdots }
\end{eqnarray}
\begin{eqnarray*}
\xymatrix{\cdots \ar[r] & \OO_{\PP^n}^{\binom{a+n}{a}}(-da) \ar[r] & \OO_{\PP^n} \ar[r] & 0}
\end{eqnarray*}

\begin{Remark} Applying Theorem \ref{thm-b0}, all syzygies $F_i$ of the complex (\ref{EN}) are simple. If we take $d=a=1,$ then we get the linear resolution and we already know that all syzygies are exceptional. We obtain exceptionality, for example, also for $n=3, d=1$ and $a=2$. Nevertheless, it is easy to loose the exceptionality. For instance, if we take $d,a$ such that $da>n$, we do not satisfy the second condition of Theorem \ref{thm-excep}. Moreover, if we consider $n=3$, $d=2$ and $a=1$ the syzygy bundles are not exceptional because the first condition of Theorem \ref{thm-excep} is not satisfied.
\end{Remark}

\nocite{*}
\bibliographystyle{alpha}
\bibliography{pureresbib}

%\begin{thebibliography}{99}

%\bibitem{MMRN} J. C. Migliore, R. M. Mir\'{o}-Roig, U. Nagel. {\it Minimal resolution of relatively compressed level algebras}. J. Algebra {\bf 284} (2005), 333-370.

%\bibitem{MR} R. M. Mir\'{o}-Roig. {\it Determinantal ideals}. Progress in Mathematics {\bf 264}, Birkhauser, 2008.

%\end{thebibliography}

\end{document}